\documentclass[11pt]{article}

\usepackage[utf8]{inputenc} 
\usepackage[T1]{fontenc}    
\usepackage[hyperfootnotes=false]{hyperref}       
\usepackage{url}            
\usepackage{booktabs}       
\usepackage{amsfonts}       
\usepackage{nicefrac}       
\usepackage{microtype}      
\usepackage{setspace}
\usepackage[round]{natbib}
\usepackage[left=1.25in,top=1.25in,right=1.25in,bottom=1.25in,head=1.25in]{geometry}

\usepackage{enumerate}
\usepackage{titling}

\newcommand{\ones}{\mathbf 1}
\newcommand{\reals}{{\mbox{\bf R}}}
\newcommand{\symm}{{\mbox{\bf S}}}  


\newcommand{\diag}{\mathop{\bf diag}}


\newcommand{\co}{{\mathop {\bf co}}} 
\newcommand{\dist}{\mathop{\bf dist{}}}
\newcommand{\argmin}{\mathop{\rm argmin}}



\newcommand{\eg}{{\it e.g.}}
\newcommand{\ie}{{\it i.e.}}


\usepackage{graphicx,psfrag,amsmath,amsfonts,verbatim} 
\usepackage[small,bf]{caption}
\usepackage{url}

\ifdefined\aaslides
\else

\usepackage{amsthm}
\newtheorem{theorem}{Theorem}[section]
\newtheorem{lemma}[theorem]{Lemma}

\fi

\newcommand{\minimize}{\mathop{\mbox{minimize}}} 
\newcommand{\minimizewrt}[1]{\underset{#1}{\minimize}}

\newcommand{\subjectto}{\mbox{subject to}}

\usepackage{xcolor}

\newcommand{\optprobstart}{\begin{equation}}
\newcommand{\optprobend}{\end{equation}}

\newcommand{\optprobstartnn}{\begin{equation*}}
\newcommand{\optprobendnn}{\end{equation*}}

\usepackage{listings}

\usepackage{color}

\newcommand{\mtxstart}{\left[\begin{array}}
\newcommand{\mtxend}{\end{array}\right]}	


\usepackage{tikz}

\newcommand{\tr}{\mathop{\bf tr}}

\newcommand{\vect}{\mathop{\bf vec}}


\usepackage{bm}
\usepackage{wrapfig}
\usepackage{bbm}
\usepackage{algorithm}
\usepackage{algorithmic}
\usepackage{subcaption}
\usepackage{multirow}
\usepackage{xcolor,colortbl,cancel}

\usepackage{pdflscape}
\usepackage{changepage}

\title{A Semismooth Newton Method for Fast, Generic Convex Programming}

\date{}

\author{Alnur Ali\footnotemark[1] \\
Machine Learning Department \\
Carnegie Mellon University \\
\texttt{alnurali@cmu.edu} \and
Eric Wong\footnotemark[1] \\
Machine Learning Department \\
Carnegie Mellon University \\
\texttt{ericwong@cs.cmu.edu} \and
J.~Zico Kolter \\
Computer Science Department \\
Carnegie Mellon University \\
\texttt{zkolter@cs.cmu.edu}}

\begin{document}
\maketitle

\begin{abstract}
We introduce Newton-ADMM, a method for fast
conic optimization.  The basic idea is to view the residuals of consecutive iterates generated by the
alternating direction method of multipliers (ADMM) as a set of fixed point
equations, and then use a \textit{nonsmooth} Newton method to find a solution; we apply the basic idea to the Splitting Cone Solver (SCS), a state-of-the-art method for solving generic conic optimization
problems.  We demonstrate theoretically, by extending the theory of 
\textit{semismooth operators}, that Newton-ADMM converges rapidly (\ie,
quadratically) to a solution; empirically, Newton-ADMM is significantly faster
than SCS on a number of problems.  The method also has essentially no tuning parameters, generates certificates of primal or dual infeasibility, when appropriate, and can be specialized to solve specific convex problems.
\end{abstract}

\setcounter{footnote}{1}
\renewcommand*{\thefootnote}{\fnsymbol{footnote}}
\footnotetext{These authors contributed equally.}

\section{Introduction and related work}
\textit{Conic optimization problems} (or \textit{cone programs}) are convex optimization problems of the form
\begin{equation}
\begin{array}{ll}
\minimizewrt{x \in \mathbf{R}^n} \quad c^T x \quad\quad \subjectto \quad b - Ax \in \mathcal{K},
\end{array}
\label{eq:coneprog}
\end{equation}
where $c \in \reals^{n}, \; A \in \reals^{m \times n}, \; b \in \reals^{m}, \; \mathcal{K}$ are problem data, specified by the user, and $\mathcal{K}$ is a \textit{proper cone} \citep{nesterov1994interior,ben2001lectures,boyd2004convex}; we give a formal treatment of proper cones in Section \ref{sec:bkgd}, but a simple example of a proper cone, for now, is the \textit{nonnegative orthant}, \ie, the set of all points in $\reals^m$ with nonnegative components.  These problems are quite general, encapsulating a number of standard problem classes: \eg, taking $\mathcal{K}$ as the nonnegative orthant yields a linear program; taking $\mathcal{K}$ as the \textit{positive semidefinite cone}, \ie, the space of $m \times m$ positive semidefinite matrices $\symm_{+}^m$, yields a semidefinite program; and taking $\mathcal{K}$ as the \textit{second-order} (or \textit{Lorentz}) \textit{cone} $\{ (x, y) \in \reals^{m-1} \times \reals : \| x \|_2 \leq y \}$ yields a second-order cone program (a quadratic program is a special case).

Due, in part, to their generality, cone programs have been the focus of much recent work, and additionally form the basis of many convex optimization modeling frameworks, \eg, sdpsol \citep{wu2000sdpsol}, YALMIP \citep{lofberg2005yalmip}, and the CVX family of frameworks \citep{grant2008cvx,diamond2016cvxpy,hong2014cvx}.  These frameworks generally make it easy to quickly solve small and medium-sized convex optimization problems to high accuracy; they work by allowing the user to specify a generic convex optimization problem in a way that resembles its mathematical representation, then convert the problem into a form similar to \eqref{eq:coneprog}, and finally solve the problem.  Primal-dual interior point methods, \eg, SeDuMi \citep{sturm2002implementation}, SDPT3 \citep{toh2012implementation}, and CVXOPT \citep{andersen2011interior}, are common for solving these cone programs.  These methods are useful, as they generally converge to high accuracy in just tens of iterations, but they solve a Newton system on each iteration, and so have difficulty scaling to high-dimensional (\ie, large-$n$) problems.

In recent work, \citet{o2016conic} use the alternating direction method
of multipliers (ADMM) \citep{boyd2011distributed} to solve generic cone
programs; operator splitting methods (\eg, ADMM, Peaceman-Rachford splitting
\citep{peaceman1955numerical}, Douglas-Rachford splitting
\citep{douglas1956numerical}, and dual decomposition) generally converge to
modest accuracy in just a few iterations, so the approach (called the splitting
conic solver, or SCS) is scalable, and also has a number of other benefits, \eg, provding certificates of primal or dual infeasibility.

In this paper, we introduce a new method (called ``Newton-ADMM'') for solving
large-scale, generic cone programs rapidly to high accuracy.  The basic idea is
to view the usual ADMM recurrence relation as a fixed point iteration, and then
use a truncated, \textit{nonsmooth} Newton method to find a fixed point; to
justify the approach, we extend the theory of \textit{semismooth operators},
coming out of the applied mathematics literature over the last two decades
\citep{mifflin1977semismooth,qi1993nonsmooth,martinez1995inexact,facchinei1996inexact},
although it has received little attention from the machine learning community
\citep{ferris2004semismooth}.  We apply the approach to the fixed
point iteration associated with SCS, to obtain a general purpose conic
optimizer.  We show, under regularity conditions, that Newton-ADMM is quadratically convergent; empirically, Newton-ADMM is significantly faster than SCS, on a number of problems.  Also, Newton-ADMM has essentially no tuning parameters, and generates certificates of infeasibility, helpful in diagnosing problem misspecification.

The rest of the paper is organized as follows.  In Section \ref{sec:bkgd}, we
give the background on cone programs, SCS, and semismooth operators, required to
derive our method for solving generic cone programs, Newton-ADMM.
.  In Section \ref{sec:ours}, we present Newton-ADMM, and establish some of its basic properties.  In Section \ref{sec:theory}, we give various convergence guarantees.  In Section \ref{sec:exps}, we empirically evaluate Newton-ADMM, and describe an extension as a specialized solver.  We conclude with a discussion in Section \ref{sec:disc}.

\section{Background}
\label{sec:bkgd}
We first give some background on cones.  Using this background, we go on to describe SCS, the cone program solver of \citet{o2016conic}, in more detail.  Finally, we give an overview of semismoothness \citep{mifflin1977semismooth}, a generalization of smoothness, central to our Newton method.

\subsection{Cone programming}
We say that a set $\mathcal{C}$ is a \textit{cone} if, for all $x \in \mathcal{C}$, and $\theta \geq 0$, we get that $\theta x \in \mathcal{C}$.  The dual cone $\mathcal{C}^*$, associated with the cone $\mathcal{C}$, is defined as the set $\{ y : y^T x \geq 0, \; \forall x \in \mathcal{C} \}$.  Additionally, a cone $\mathcal{C}$ is a \textit{convex cone} if, for all $x, y \in \mathcal{C}$, and $\theta_1, \theta_2 \geq 0$, we get that $\theta_1 x + \theta_2 y \in \mathcal{C}$.  A cone $\mathcal{C}$ is a \textit{proper cone} if it is (i) convex; (ii) closed; (iii) \textit{solid}, \ie, its interior is nonempty; and (iv) \textit{pointed}, \ie, if both $x, -x \in \mathcal{C}$, then we get that $x = 0$.\vspace{0.1in}

The nonnegative orthant, second-order cone, and positive semidefinite cone are all proper cones \citep[Section 2.4.1]{boyd2004convex}; these cones, along with the \textit{exponential cone} (defined below), can be used to represent most convex optimization problems encountered in practice.  The exponential cone (see, \eg, \citet{serrano2015algorithms}), $\mathcal{K}_{\textrm{exp}}$, is a three-dimensional proper cone, defined as the closure of the epigraph of the perspective of $\exp(x)$, with $x \in \reals$:
\begin{align*}
\mathcal{K}_{\textrm{exp}} 
& = \left\{ (x,y,z) : x \in \reals, \; y > 0, \; z \geq y \exp(x / y) \right\} \cup \left\{ (x, 0, z) : x \leq 0, \; z \geq 0 \right\}.
\end{align*}
Cone programs resembling \eqref{eq:coneprog} were first described by \citet[page 67]{nesterov1994interior}, although special cases were, of course, considered earlier.  Standard references include \citet{ben2001lectures} and \citet[Section 4.6.1]{boyd2004convex}.

\subsection{SCS}
Roughly speaking, SCS is an application of ADMM to a particular feasibility problem arising from the Karush-Kuhn-Tucker (KKT) optimality conditions associated with a cone program.  To see this, consider a reformulation of the cone program \eqref{eq:coneprog}, with slack variable $s \in \mathbf{R}^m$:
\begin{equation}
\begin{array}{ll}
\minimizewrt{x \in \mathbf{R}^n, \, s} \;\; c^T x \quad \subjectto \;\;  Ax + s = b, \; s \in \mathcal{K}.
\end{array}
\label{eq:coneprog2}
\end{equation}
The KKT conditions can be seen, after introducing dual variables $r \in \reals^n, \; y \in \mathcal{K}^*$, for the implicit constraint $x \in \reals^n$ and the explicit constraints, respectively, to be
\begin{align*}
A^T y + c = r & \quad \textrm{(stationarity)} \\
Ax + s = b, \; s \in \mathcal{K} & \quad \textrm{(primal feasibility)} \\
r \in \{0\}^n, \; y \in \mathcal{K}^* & \quad \textrm{(dual feasibility)} \\
-c^T x - b^T y = 0 & \quad \textrm{(complementary slackness)},
\end{align*}
where $\mathcal{K}^*$ is the dual cone of $\mathcal{K}$; thus, we can obtain a solution to \eqref{eq:coneprog2}, by solving the KKT system
\begin{align}
& \left[
\begin{array}{cc}
0 & A^T \\
-A & 0 \\
-c^T & -b^T
\end{array}
\right]
\left[
\begin{array}{c}
x \\
y
\end{array}
\right]
+
\left[
\begin{array}{c}
c \\
b \\
0
\end{array}
\right]
=
\left[
\begin{array}{c}
r \\
s \\
0
\end{array}
\right], \quad\quad x \in \reals^n, \; y \in \mathcal{K}^*, \; r \in \{0\}^n, \; s \in \mathcal{K}. \label{eq:kktsys}
\end{align}

\paragraph{Self-dual homogeneous embedding.}
When the cone program \eqref{eq:coneprog2} is primal/dual infeasible, there is no solution to the KKT system \eqref{eq:kktsys}; so, consider embedding the system \eqref{eq:kktsys} in a larger system, with new variables $\tau, \kappa$, and solving
\begin{align}
& \left[
\begin{array}{ccc}
0 & A^T & c \\
-A & 0 & b \\
-c^T & -b^T & 0
\end{array}
\right]
\left[
\begin{array}{c}
x \\
y \\
\tau
\end{array}
\right]
=
\left[
\begin{array}{c}
r \\
s \\
\kappa
\end{array}
\right], \quad\quad x \in \reals^n, \; y \in \mathcal{K}^*, \; \tau \in \reals_+, \; r \in \{0\}^n, \; s \in \mathcal{K}, \; \kappa \in \reals_+, \label{eq:kktsysembed}
\end{align}
which is always solvable.  The embedding \eqref{eq:kktsysembed}, due to \citet{ye1994nl}, has a number of other nice properties.  Observe that when $\tau^\star = 1, \, \kappa^\star = 0$ are solutions to the embedding \eqref{eq:kktsysembed}, we recover the KKT system \eqref{eq:kktsys}; it turns out that the solutions $\tau^\star, \kappa^\star$ characterize the primal or dual (in)feasibility of the cone program \eqref{eq:coneprog2}.  In particular, if $\tau^\star > 0, \, \kappa^\star = 0$, then the cone program \eqref{eq:coneprog2} is feasible, with a primal-dual solution $(1/\tau^\star) (x^\star, y^\star, r^\star, s^\star)$; on the other hand, if $\tau^\star = 0, \, \kappa^\star \geq 0$, then \eqref{eq:coneprog2} is primal or dual infeasible (or both), depending on the exact values of $\tau^\star, \, \kappa^\star$ \citep[Section 2.3]{o2016conic}.  The embedding \eqref{eq:kktsysembed} can also be seen as first-order homogeneous, in the sense that $(x^\star, y^\star, \tau^\star, r^\star, s^\star, \kappa^\star)$ being a solution to \eqref{eq:kktsysembed} implies that $k (x^\star, y^\star, \tau^\star, r^\star, s^\star, \kappa^\star)$, for $k \geq 0$, is also a solution.  Finally, viewing the embedding \eqref{eq:kktsysembed} as a feasibility problem, the dual of the feasibility problem turns out to be the original feasibility problem, \ie, the embedding is self-dual.

\paragraph{ADMM-based algorithm.}
As mentioned, the embedding \eqref{eq:kktsysembed} can be viewed as the feasibility problem
\begin{equation*}
\begin{array}{ll}
\textrm{find} \quad u,v \quad\quad \subjectto \quad Q u = v, \; (u,v) \in \mathcal{C} \times \mathcal{C}^*,
\end{array}
\end{equation*}
where we write $\mathcal{C} = \reals^n \times \mathcal{K}^* \times \reals_+, \; \mathcal{C}^* = \{0\}^n \times \mathcal{K} \times \reals_+$,
\begin{align}
Q & = \left[
\begin{array}{ccc}
0 & A^T & c \\
-A & 0 & b \\
-c^T & -b^T & 0
\end{array}
\right], \quad
u = \left[
\begin{array}{c}
x \\
y \\
\tau
\end{array}
\right], \quad
v = \left[
\begin{array}{c}
r \\
s \\
\kappa
\end{array}
\right]. \label{eq:Q}
\end{align}
Introducing new variables $\tilde u, \tilde v \in \reals^k$, where $k = n + m + 1$, and rewriting so that we may apply ADMM, we get:
\begin{equation*}
\begin{array}{ll}
\minimizewrt{u, \, v, \, \tilde u, \, \tilde v} & I_{\mathcal{C} \times \mathcal{C}^*} (u,v) + I_{Q u^\star = v^\star}(\tilde u, \tilde v) \\
\subjectto & \left[ \begin{array}{c} u \\ v \end{array} \right] = \left[ \begin{array}{c} \tilde u \\ \tilde v \end{array} \right],
\end{array}
\end{equation*}
where $I_{\mathcal{C} \times \mathcal{C}^*}$ and $I_{Q u^\star = v^\star}$ are the indicator functions of the product space $\mathcal{C} \times \mathcal{C}^*$, and the affine space of solutions to $Q u = v$, respectively; after simplifying (see \citet[Section 3]{o2016conic}), the ADMM recurrences are just
\begin{align}
\tilde u & \leftarrow (I + Q)^{-1} (u + v). \label{eq:1} \\
u & \leftarrow P_{\mathcal{C}}(\tilde u - v) \label{eq:2} \\
v & \leftarrow v - \tilde u + u, \label{eq:3}
\end{align}
where $P_\mathcal{C}$ denotes the projection onto $\mathcal{C}$.  For the update \eqref{eq:1}, $Q$ is a skew-symmetric matrix, hence $I+Q$ is nonsingular, so the update can be done efficiently via the Schur complement, matrix inversion lemma, and $L D L^T$ factorization.

\paragraph{Projections onto dual cones.}
For the update \eqref{eq:2}, the projection onto $\mathcal{C}$ boils down to separate projections onto the ``free'' cone $\reals^n$, the dual cone of $\mathcal{K}$, and the nonnegative orthant $\reals_+$.  These projections, for many $\mathcal{K}$, are well-known:
\begin{itemize}
\item \textit{Free cone.}  Here, $P_{\mathbf{R}^n}(z) = z$, for $z \in \reals^n$.

\item \textit{Nonnegative orthant, $\mathcal{K}_{\textrm{no}}$.}  The projection onto $\mathcal{K}_{\textrm{no}}$ is simply given by applying the positive part operator:
\begin{equation}
P_{\mathcal{K}_{\textrm{no}}}(z) = \max \{ z,0 \}. \label{eq:projnno}
\end{equation}

\item \textit{Second-order cone, $\mathcal{K}_{\textrm{soc}}$}.  Write $z = (z_1, z_2) \in \reals^m, \; z_1 \in \reals^{m-1}, \; z_2 \in \reals$.  Then the projection is
\begin{equation}
P_{\mathcal{K}_{\textrm{soc}}}(z) = 
\begin{cases}
0, & \| z_1 \|_2 \leq -z_2 \\ 
z, & \| z_1 \|_2 \leq z_2 \\
\frac{1}{2} (1 + \frac{z_2}{\| z_1 \|_2}) (z_1, \| z_1 \|_2), & \textrm{otherwise}. 
\end{cases} \label{eq:projsoc}
\end{equation}

\item \textit{Positive semidefinite cone, $\mathcal{K}_{\textrm{psd}}$.}  The projection is
\begin{equation}
P_{\mathcal{K}_{\textrm{psd}}}(Z) = \sum_{i} \max \{ \lambda_i, 0 \} q_i q_i^T, \label{eq:projpsd}
\end{equation}
where $Z = \sum_{i} \lambda_i q_i q_i^T$ is the eigenvalue decomposition of $Z$.

\item \textit{Exponential cone, $\mathcal{K}_{\textrm{exp}}$.}  If $z \in \mathcal{K}_{\textrm{exp}}$, then $P_{\mathcal{K}_{\textrm{exp}}}(z) = z$.  If $-z \in \mathcal{K}_{\textrm{exp}}^*$, then $P_{\mathcal{K}_{\textrm{exp}}}(z) = 0$.  If $z_1, z_2 < 0$, \ie, the first two components of $z$ are negative, then $P_{\mathcal{K}_{\textrm{exp}}} = (z_1,\max \{z_2, 0\}, \max \{ z_3, 0 \})$.  Otherwise, the projection is given by
\begin{equation}
\begin{array}{ll}
\underset{\tilde z \in \mathbf{R}^3 : \tilde z_2 > 0}{\argmin} & (1/2) \| \tilde z - z \|_2^2 \\
\subjectto & \tilde z_2 \exp(\tilde z_1 / \tilde z_2) = \tilde z_3,
\end{array}
\label{eq:projexp}
\end{equation}
which can be computed using a Newton method \citep[Section 6.3.4]{parikh2014proximal}.
\end{itemize}

The nonnegative orthant, second-order cone, and positive semidefinite cone are all self-dual, so projecting onto these cones is equivalent to projecting onto their dual cones; to project onto the dual of the exponential cone, we use the Moreau decomposition to get
\begin{equation}
P_{\mathcal{K}_{\textrm{exp}}^*}(z) = z + P_{\mathcal{K}_{\textrm{exp}}}(-z). \label{eq:moreau}
\end{equation} 

\subsection{Semismooth operators}
Here, we give an overview of semismoothness; good references include \citet{ulbrich2011semismooth} and \citet{izmailov2014newton}.  We consider maps $F : \reals^k \to \reals^k$ that are locally Lipschitz, \ie, for all $z_1 \in \reals^k$, and $z_2 \in \mathcal{N}(z_1, \delta)$, where $\mathcal{N}(z_1, \delta)$ is a ball centered at $z_1$ with radius $\delta > 0$, there exists some $L_{z_1} > 0$, such that $\| F(z_1) - F(z_2) \|_2 \leq L_{z_1} \| z_1 - z_2 \|_2$.  By a result known as Rademacher's theorem \citep[Section 3.1.2, Theorem 2]{evans2015measure}, we get that $F$ is differentiable almost everywhere; we let $\mathcal{D}$ denote the points at which $F$ is differentiable, so that $\reals^k \setminus \mathcal{D}$ is a set of measure zero.

\paragraph{The generalized Jacobian.}
\citet{clarke1990optimization} suggested the \textit{generalized Jacobian} as a way to define the derivative of a locally Lipschitz map $F : \reals^k \to \reals^k$, at all points.  The generalized Jacobian is related to the subgradient, as well as the directional derivative, as we discuss later on; the generalized Jacobian, though, turns out to be quite useful for defining effective nonsmooth Newton methods.  The generalized Jacobian $\mathcal{J}(z)$ at a point $z \in \reals^k$ of a map $F : \reals^k \to \reals^k$, is defined as ($\co$ denotes convex hull)
\begin{equation}
\mathcal{J}(z) = \co \left\{ \lim_{i \to \infty} J(z_i) : (z_i) \in \mathcal{D}, \; (z_i) \to z \right\}, \label{eq:genjacob}
\end{equation}
where $J(z_i) \in \reals^{k \times k}$ is the usual Jacobian of $F$ at $z_i$.  Two useful properties of the generalized Jacobian \citep[Proposition 1.2]{clarke1990optimization}: (i) $\mathcal{J}(z)$, at any $z$, is always nonempty; and (ii) if each component $F_i$ is convex, then the $i$th row of any element of $\mathcal{J}(z)$ is just a subgradient of $F_i$ at $z$.

\paragraph{(Strong) semismoothness and consequences.}
We say that a map $F : \reals^k \to \reals^k$ is semismooth if it is locally Lipschitz, and if, for all $z, \delta \in \reals^k$, the limit 
\begin{equation}
\lim_{\delta \to 0, \; J \in \mathcal{J}(z + \delta)} J \delta \label{eq:semismooth}
\end{equation}
exists (see, \eg, \citet[Definition 1]{mifflin1977semismooth} and \citet[Section 2]{qi1993nonsmooth}).  The above definition is somewhat opaque, so various works have provided an alternative characterization of semismoothness: $F$ is semismooth if and only if it is (i) locally Lipschitz; (ii) directionally differentiable, in every direction; and (iii) we get
\begin{equation*}
\lim_{\delta \to 0, \; J \in \mathcal{J}(z + \delta) } \frac{\| F(z + \delta) - F(z) - J \delta \|_2}{\| \delta \|_2} = 0, 
\end{equation*}
\ie, $\| F(z + \delta) - F(z) - J \delta \|_2 = o( \| \delta \|_2 ), \; \delta \to 0$ (see, \eg, \citet[Theorem 2.3]{qi1993nonsmooth}, \citet[Theorem 2.9]{hintermuller2010semismooth}, \citet[page 2]{qi1999survey}, and \citet[Proposition 2]{martinez1995inexact}).  Examples of semismooth functions include $\log(1+|x|)$, all convex functions, and all smooth functions \citep{mifflin1977semismooth,smietanski2007generalized}; on the other hand, $\sqrt{|x|}$ is not semismooth.  A linear combination of semismooth functions is semismooth \citep[Proposition 1.75]{izmailov2014newton}.  Finally, we say that a map is \textit{strongly semismooth} if, under the same conditions as above, we can replace \eqref{eq:semismooth} with
\begin{equation*}
\limsup_{\delta \to 0, \; J \in \mathcal{J}(z + \delta)} \frac{ \| F(z + \delta) - F(z) - J \delta \|_2 }{ \| \delta \|_2^2 } < \infty, 
\end{equation*}
\ie, $\| F(z + \delta) - F(z) - J \delta \|_2 = O( \| \delta \|_2^2 ), \; \delta \to 0$ (see \citet[Proposition 2.3]{facchinei1996inexact} and \citet[Definition 1]{facchinei1997nonsmooth}).

\section{Newton-ADMM and its basic properties}
\label{sec:ours}
Next, we describe Newton-ADMM, our nonsmooth Newton method for generic convex programming; again, the basic idea is to view the ADMM recurrences \eqref{eq:1} -- \eqref{eq:3}, used by SCS, as a fixed point iteration, and then use a nonsmooth Newton method to find a fixed point.  Accordingly, we let
\begin{align*}
F(z) = 
\left[
\begin{array}{c}
\tilde u - (I + Q)^{-1} (u + v) \\
u - P_{\mathcal{C}}(\tilde u - v) \\
\tilde u - u
\end{array}
\right],
\end{align*}
which are just the residuals of the consecutive ADMM iterates given by \eqref{eq:1} -- \eqref{eq:3}, and $z = (\tilde u, u, v) \in \reals^{3k}$; multiplying by $\diag(I+Q, I, I)$ to change coordinates gives
\begin{align}
F(z) = 
\left[
\begin{array}{c}
(I + Q) \tilde u - (u + v) \\
u - P_{\mathcal{C}}(\tilde u - v) \\
\tilde u - u
\end{array}
\right]. \label{eq:F}
\end{align}

Now, we would like to apply a Newton method to $F$, but projections onto proper cones are not differentiable, in general.  However, for many cones of interest, they are (strongly) semismooth; the following lemma summarizes.
\begin{lemma} \label{lem:projsemi}
Projections onto the nonnegative orthant, second-order cone, and positive semidefinite cone are all strongly semismooth; see, \eg, \citet[Section 1]{kong2009clarke}, \citet[Lemma 2.3]{kanzow2006semismooth}, and \citet[Corollary 4.15]{sun2002semismooth}, respectively.
\end{lemma}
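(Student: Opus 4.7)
The claim is a compendium of three standard results, each already proven in the cited works, so the plan is to outline how each case is verified rather than reprove them from scratch; the main task is to identify the appropriate structural principles.

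First I would treat the nonnegative orthant: since $P_{\mathcal{K}_{\textrm{no}}}(z)=\max\{z,0\}$ acts componentwise and each scalar branch is affine (in fact, piecewise linear on two polyhedral pieces meeting at $0$), one can verify directly that for any $z$ and any $J\in\mathcal{J}(z+\delta)$ (which is a diagonal $0/1$ matrix), $F(z+\delta)-F(z)-J\delta=0$ for $\delta$ small enough in each sign-pattern cell; this gives strong semismoothness with the $O(\|\delta\|_2^2)$ bound trivially. This is the template for the general fact that piecewise affine maps are strongly semismooth, which is what the Kong reference ultimately invokes.

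Next, the second-order cone. The projection formula in \eqref{eq:projsoc} is $C^\infty$ in the interior of each of the three pieces, so strong semismoothness can only fail at the boundary $\|z_1\|_2=|z_2|$. The plan is to linearize the nontrivial branch $\tfrac{1}{2}(1+z_2/\|z_1\|_2)(z_1,\|z_1\|_2)$ around a boundary point, compute its generalized Jacobian (by taking limits of the smooth Jacobians on either side and forming their convex hull), and check that the second-order remainder $\|P_{\mathcal{K}_{\textrm{soc}}}(z+\delta)-P_{\mathcal{K}_{\textrm{soc}}}(z)-J\delta\|_2$ is $O(\|\delta\|_2^2)$. This is a Taylor expansion calculation using that $\|z_1\|_2$ is analytic away from $z_1=0$, and is carried out in detail in the Kanzow reference.

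The positive semidefinite case is the most delicate and is where the real obstacle lies. One exploits the Löwner operator representation $P_{\mathcal{K}_{\textrm{psd}}}(Z)=\sum_i g(\lambda_i)q_iq_i^T$ with $g(t)=\max\{t,0\}$. The key structural theorem (Sun--Sun) says that the matrix-valued Löwner operator associated with a symmetric real-valued function is strongly semismooth if and only if the underlying scalar function is strongly semismooth. Since $g$ is piecewise linear, hence strongly semismooth by the same argument as in the orthant case, the theorem yields the conclusion; I would state this reduction and cite Corollary 4.15 of Sun--Sun for the transfer principle. The hardest part here is the eigenvalue-perturbation bookkeeping needed to control the off-diagonal blocks when eigenvalues cross zero with higher multiplicity, but this is fully handled in the cited work, so for our purposes it suffices to invoke it. Combining the three cases closes the lemma.
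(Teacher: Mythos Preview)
Your proposal is correct and, in fact, does more than the paper: the paper provides no proof of this lemma at all beyond the three citations embedded in the statement, treating it purely as a summary of known results. Your outline of the underlying mechanisms (piecewise-affine structure for the orthant, boundary Taylor analysis for the second-order cone, and the Sun--Sun L\"owner transfer principle for the PSD cone) is accurate and matches what those cited works actually do, so there is no gap.
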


Additionally, we give the following new result, for the exponential cone, which may be of independent interest.
\begin{lemma} \label{lem:projexpsemi}
The projection onto the exponential cone is semismooth.
\end{lemma}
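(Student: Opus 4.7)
The plan is to exhibit $P_{\mathcal{K}_{\textrm{exp}}}$ as a continuous, piecewise $C^1$ map on $\reals^3$ and then invoke the standard fact that any such map is semismooth (see, \eg, \citet{ulbrich2011semismooth,izmailov2014newton}). Local Lipschitz continuity and global continuity are automatic, because projection onto any nonempty closed convex set is nonexpansive.

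Following the four cases in the stated projection formula, I would partition $\reals^3$ into the closed regions $R_1 = \mathcal{K}_{\textrm{exp}}$, $R_2 = \{z : -z \in \mathcal{K}_{\textrm{exp}}^*\}$, $R_3 = \{z : z_1 \leq 0, \, z_2 \leq 0\}$, and $R_4 = \cl(\reals^3 \setminus (R_1 \cup R_2 \cup R_3))$. On $R_1$ the projection is the identity and on $R_2$ it is identically zero, both of which are $C^\infty$. On $R_3$ the formula reduces to $(z_1, \, 0, \, \max\{z_3, 0\})$, a continuous selection of two affine maps, which is strongly semismooth by the same piecewise-affine argument used for $P_{\mathcal{K}_{\textrm{no}}}$ in Lemma~\ref{lem:projsemi}. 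The substantive step is $R_4$.

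On $\inte R_4$ the projection $\tilde z(z)$ is the unique minimizer of \eqref{eq:projexp} with active equality constraint $g(\tilde z) := \tilde z_2 \exp(\tilde z_1 / \tilde z_2) - \tilde z_3 = 0$, and the corresponding KKT system takes the form
\begin{equation*}
\tilde z - z + \lambda \, \nabla g(\tilde z) = 0, \qquad g(\tilde z) = 0,
\end{equation*}
in the variables $(\tilde z, \lambda) \in \reals^4$. The plan is to apply the implicit function theorem to this system: $g$ is $C^\infty$ on the half-space $\{\tilde z_2 > 0\}$, which contains $\tilde z(z)$ throughout $\inte R_4$ by the way the complementary cases have been carved out; $\nabla g$ does not vanish on the feasible set, yielding the linear independence constraint qualification; and strong convexity of the squared-distance objective together with $g \in C^2$ supplies the second-order sufficient condition. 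The resulting KKT Jacobian in $(\tilde z, \lambda)$ is then nonsingular, so $\tilde z(\cdot)$ is $C^\infty$ on $\inte R_4$ and extends as a $C^1$ map to a neighborhood of each point of $\partial R_4$.

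The main obstacle I anticipate is knitting the four pieces together into a single global piecewise-$C^1$ selection. Concretely, I need to show that around each $z \in \reals^3$ there is a neighborhood on which $P_{\mathcal{K}_{\textrm{exp}}}$ agrees pointwise with one of a fixed finite family of $C^1$ ``selection functions''---the identity, the zero map, the two affine maps whose maximum realizes the $R_3$ branch, and the IFT-extension of the $R_4$ branch described above. Continuity of projections forces these selections to coincide on overlaps, so the hypotheses of the piecewise-$C^1$ semismoothness theorem are met and semismoothness of $P_{\mathcal{K}_{\textrm{exp}}}$ follows.
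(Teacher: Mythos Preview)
Your route---show $P_{\mathcal{K}_{\textrm{exp}}}$ is piecewise $C^1$ and invoke the standard PC$^1\Rightarrow$\,semismooth theorem---is valid and genuinely different from the paper's. The paper argues computationally: it forward-references Lemma~\ref{lem:jacprojexp}, where the Jacobian on the fourth region is realized as the upper-left $3\times3$ block of $D^{-1}$ for an explicit $4\times4$ KKT matrix $D$, and then asserts, via symbolic manipulation, that $D^{-1}$ remains a well-defined finite matrix even as $z_1^\star,z_2^\star,\nu^\star\to0$; from this the semismoothness limit \eqref{eq:semismooth} is read off directly. Your structural argument is cleaner and independent of brute-force symbolic algebra; the paper's, on the other hand, piggybacks on a Jacobian it must compute anyway for the Newton step, so semismoothness comes essentially for free once Lemma~\ref{lem:jacprojexp} is in hand.

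The one place your sketch is thinner than you may realize is the $C^1$ extension of the $R_4$ selection across $\partial R_4\cap\partial R_3$. There the projected point $\tilde z(z)$ approaches the edge $\{\tilde z_2=0\}$ of the cone, which is exactly where your constraint $g(\tilde z)=\tilde z_2\exp(\tilde z_1/\tilde z_2)-\tilde z_3$ leaves its domain of smoothness, so the implicit function theorem does not by itself deliver a $C^1$ selection on an open neighborhood of such boundary points---and the PC$^1$ theorem needs the selection functions to be $C^1$ on an open set, not merely on $\inte R_4$. Your appeal to ``continuity of projections forces these selections to coincide on overlaps'' controls the values of $P_{\mathcal{K}_{\textrm{exp}}}$ but not the regularity of the selection. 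The paper's symbolic check that $D^{-1}$ stays finite as $z_2^\star\to0$ is precisely the computation that closes this gap; in your framework it is what certifies that the KKT Jacobian remains nonsingular up to the boundary and hence that the IFT map extends.
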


We defer all proofs to the supplement.

Putting the pieces together, the following lemma establishes that $F$, defined in \eqref{eq:F}, is (strongly) semismooth.
\begin{lemma} \label{lem:Fsemi}
When $\mathcal{K}$, from the cone program \eqref{eq:coneprog}, is the nonnegative orthant, second-order cone, or positive semidefinite cone, then the map $F$, defined in \eqref{eq:F}, is strongly semismooth; when $\mathcal{K}$ is the exponential cone, then the map $F$ is semismooth.
\end{lemma}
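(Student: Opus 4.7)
\textbf{Proof proposal for Lemma \ref{lem:Fsemi}.} The plan is to decompose $F$ into its three block components and show each is (strongly) semismooth, then invoke closure of the (strongly) semismooth class under the operations we need (sums, concatenation into vector-valued maps, and composition with smooth/affine maps). Recall the text already states that linear combinations of semismooth functions are semismooth (citing \citet[Proposition 1.75]{izmailov2014newton}), and that all smooth (in particular affine) maps are strongly semismooth. The first block $(I+Q)\tilde u - (u+v)$ and the third block $\tilde u - u$ are affine in $z = (\tilde u, u, v)$, so both are strongly semismooth for free. All the content is therefore concentrated in the middle block $u - P_{\mathcal{C}}(\tilde u - v)$.

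For the middle block, I would first reduce it to projection onto a product cone. Since $\mathcal{C} = \reals^n \times \mathcal{K}^* \times \reals_+$, the projection $P_{\mathcal{C}}$ splits coordinate-wise into $P_{\reals^n}$, $P_{\mathcal{K}^*}$, and $P_{\reals_+}$, so it suffices to check each factor separately (a product map is semismooth iff each factor is). The factor $P_{\reals^n}$ is the identity, hence strongly semismooth. The factor $P_{\reals_+}$ is covered by Lemma \ref{lem:projsemi} (it is a one-dimensional instance of the nonnegative orthant projection). What remains is the dual-cone factor $P_{\mathcal{K}^*}$:
\begin{itemize}
\item If $\mathcal{K}$ is the nonnegative orthant, the second-order cone, or the positive semidefinite cone, then $\mathcal{K}$ is self-dual, so $P_{\mathcal{K}^*} = P_{\mathcal{K}}$ and Lemma \ref{lem:projsemi} directly yields strong semismoothness.
\item If $\mathcal{K}$ is the exponential cone, I would apply the Moreau identity \eqref{eq:moreau}, $P_{\mathcal{K}_{\textrm{exp}}^*}(w) = w + P_{\mathcal{K}_{\textrm{exp}}}(-w)$, which exhibits $P_{\mathcal{K}_{\textrm{exp}}^*}$ as the sum of the identity with the composition of $P_{\mathcal{K}_{\textrm{exp}}}$ and the smooth map $w \mapsto -w$; by Lemma \ref{lem:projexpsemi} and the closure of semismoothness under composition with smooth maps and under linear combinations, $P_{\mathcal{K}_{\textrm{exp}}^*}$ is semismooth.
\end{itemize}

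Finally, precomposing $P_{\mathcal{C}}$ with the affine map $(\tilde u, u, v) \mapsto \tilde u - v$ preserves (strong) semismoothness, and subtracting the affine map $u$ does the same. Stacking the three blocks into the vector-valued $F$ preserves the class as well, since a vector-valued map is (strongly) semismooth iff each component is. In the self-dual cases all ingredients are strongly semismooth, so $F$ is strongly semismooth; in the exponential-cone case one factor drops to merely semismooth, and the combined map is therefore semismooth but not known to be strongly so. The main point requiring care is the composition/product closure — I would verify the relevant closure statements explicitly in the supplement (they follow directly from the $o(\|\delta\|_2)$ or $O(\|\delta\|_2^2)$ characterizations of (strong) semismoothness together with the chain rule applied to the generalized Jacobian), since the semismoothness of each individual projection piece is already handed to us by Lemmas \ref{lem:projsemi} and \ref{lem:projexpsemi}.
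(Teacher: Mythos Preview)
Your proposal is correct and follows essentially the same route as the paper: both arguments reduce to the (strong) semismoothness of the individual cone projections from Lemmas \ref{lem:projsemi} and \ref{lem:projexpsemi}, and then appeal to closure of (strong) semismoothness under affine combinations and componentwise stacking (the paper packages the latter into a short auxiliary lemma stating that $AF+b$ is (strongly) semismooth whenever $F$ is). If anything, your version is more explicit than the paper's in two respects --- you spell out the product-cone decomposition of $P_{\mathcal{C}}$ and you invoke the Moreau identity to pass from $P_{\mathcal{K}_{\textrm{exp}}}$ to $P_{\mathcal{K}_{\textrm{exp}}^*}$ --- whereas the paper's proof leaves both of these steps implicit.
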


The preceding results lay the groundwork for us to use a semismooth Newton method \citep{qi1993nonsmooth}, applied to $F$, where we replace the usual Jacobian with any element of the generalized Jacobian \eqref{eq:genjacob}; however, as many have observed \citep{khan2017generalized}, it is not always straightforward to compute an element of the generalized Jacobian.  Fortunately, for us, we can just compute a subgradient of each row of $F$, as the following lemma establishes.
\begin{lemma} \label{lem:subg}
The $i$th row of each element of the generalized Jacobian $\mathcal{J}(z)$ at $z$ of the map $F$ is just a subgradient of $F_i, \; i=1,\ldots,3k$, at $z$.
\end{lemma}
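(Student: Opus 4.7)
My plan is to reduce Lemma \ref{lem:subg} to a standard containment in Clarke's theory of generalized Jacobians. Specifically, I will invoke the fact that if $F : \reals^k \to \reals^k$ is locally Lipschitz near $z$, then
\[
\mathcal{J}(z) \;\subseteq\; \partial F_1(z) \times \cdots \times \partial F_{3k}(z),
\]
where $\partial F_i(z)$ denotes the Clarke generalized gradient of the scalar component $F_i$ at $z$; this is \citet[Proposition 2.6.2]{clarke1990optimization}. Once the containment is in hand, ``subgradient of $F_i$'' in the lemma statement can be identified with an element of $\partial F_i(z)$, which collapses to the classical subgradient or the gradient in the concrete cases we actually encounter.

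First I would verify the hypothesis that $F$ is locally Lipschitz. The two affine blocks $(I+Q)\tilde u - (u+v)$ and $\tilde u - u$ of \eqref{eq:F} are globally Lipschitz (in fact smooth). The middle block $u - P_{\mathcal{C}}(\tilde u - v)$ is Lipschitz because projection onto any closed convex set (in particular the cones covered by Lemmas \ref{lem:projsemi} and \ref{lem:projexpsemi}) is nonexpansive. So $F$ sits squarely in the setting of Clarke's theory.

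I would then establish the containment essentially from the definition \eqref{eq:genjacob}. At a differentiable point $z' \in \mathcal{D}$, each component $F_i$ is also differentiable, and the $i$th row of the usual Jacobian $J(z')$ is $\nabla F_i(z') \in \partial F_i(z')$. For any limit $z_j \to z$ with $z_j \in \mathcal{D}$ and $J(z_j) \to J^\infty$, the $i$th row of $J^\infty$ is $\lim_j \nabla F_i(z_j)$, and hence lies in $\partial F_i(z)$ directly from the definition of the Clarke generalized gradient. Finally, since each $\partial F_i(z)$ is convex, taking the convex hull in \eqref{eq:genjacob} preserves row-wise membership in $\partial F_i(z)$, which is exactly the lemma's claim.

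The main subtlety I anticipate is the identification of ``subgradient'' with an element of $\partial F_i(z)$ for the middle block, whose components $F_i(z) = u_i - P_{\mathcal{C}}(\tilde u - v)_i$ need not be convex. For the nonnegative orthant the projection is $\max\{\cdot,0\}$ componentwise, so $F_i$ is concave and $\partial F_i(z)$ is (up to a sign) the usual subdifferential of the convex function $-F_i$; for the second-order, positive semidefinite, and exponential cones, the Clarke generalized gradient is what one obtains by differentiating $P_{\mathcal{C}}$ piecewise on the strata where it is smooth and taking convex hulls of the limits at the nonsmooth boundary. In each case the row-by-row objects that the Newton method actually assembles to form an element of $\mathcal{J}(z)$ are subgradients in precisely the sense the lemma states, so identifying terminology closes the proof.
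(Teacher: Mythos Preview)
Your argument is correct and is in fact cleaner than the paper's, though it takes a different route. The paper's proof argues that each component $F_i$ is a \emph{convex function}---reasoning that ``the projection onto a convex set yields a convex set,'' that affine images and coordinate restrictions of convex sets are convex, and ``hence'' the $F_i$ are convex---and then invokes the property (their Proposition~1.2 of \citet{clarke1990optimization}) that for convex components the rows of any element of $\mathcal{J}(z)$ are ordinary subgradients. Your approach sidesteps the convexity claim entirely: you appeal directly to the general containment $\mathcal{J}(z) \subseteq \partial F_1(z) \times \cdots \times \partial F_{3k}(z)$ for locally Lipschitz maps (Clarke's Proposition~2.6.2), and interpret ``subgradient'' as an element of the Clarke generalized gradient $\partial F_i(z)$.

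What each buys: the paper's route, if its convexity claim were sound, would land on classical convex subgradients and thus make the word ``subgradient'' literally correct. But as you implicitly flag, the claim is shaky---coordinates of $P_{\mathcal{C}}$ for the second-order or positive semidefinite cone are neither convex nor concave in general, and the paper's reasoning conflates ``the image is a convex set'' with ``the map is componentwise convex.'' Your route is more robust: it needs only local Lipschitzness (which you verify via nonexpansiveness of projections) and delivers the containment for \emph{all} the cones at once, at the modest cost of reading ``subgradient of $F_i$'' as ``element of $\partial F_i(z)$.'' Since the downstream use of the lemma is just to justify assembling $J$ in \eqref{eq:J}--\eqref{eq:Ju} row by row from piecewise derivatives, your interpretation is exactly what is needed.
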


Using the lemma, an element $J \in \reals^{3k \times 3k}$ of the generalized Jacobian of the map $F \in \reals^{3k}$ is then just
\begin{equation}
J = 
\left[
\begin{array}{ccc}
I+ Q & -I & -I \\
 & J_u & \\
I & -I & 0
\end{array}
\right], \label{eq:J}
\end{equation}
where
\begin{equation}
J_u =
\left[
\begin{array}{ccccccccc}
-I & 0 & 0 & I & 0 & 0 & I & 0 & 0 \\
0 & -J_{P_{\mathcal{K}^*}} & 0 & 0 & I & 0 & 0 & J_{P_{\mathcal{K}^*}} & 0 \\
0 & 0 & -\ell & 0 & 0 & 1 & 0 & 0 & \ell
\end{array}
\right] \label{eq:Ju}
\end{equation}
is a $(k \times 3k)$-dimensional matrix forming the second row of $J$; $\ell$ equals 1 if $\tilde u_\tau - v_\kappa \geq 0$ and 0 otherwise; and $J_{P_{\mathcal{K}^*}} \in \reals^{m \times m}$ is the Jacobian of the projection onto the dual cone $\mathcal{K}^*$.  Here and below, we use subscripts to select components, \eg, $\tilde u_\tau$ selects the $\tau$-component of $\tilde u$ from \eqref{eq:Q}, and we write $J$ to mean $J(z)$, where  $z = (\tilde u, u, v) \in \reals^{3k}$.

\subsection{Final algorithm}
Later, we discuss computing $J_{P_{\mathcal{K}^*}}$, the Jacobian of the projection onto the dual cone $\mathcal{K}^*$, for various cones $\mathcal{K}$; these pieces let us compute an element $J$, given in \eqref{eq:J} -- \eqref{eq:Ju}, of the generalized Jacobian of the map $F$, defined in \eqref{eq:F}, which we use instead of the usual Jacobian, in a semismooth Newton method; below, we describe a way to scale the method to larger problems (\ie, values of $n$).

\paragraph{Truncated, semismooth Newton method.}
The conjugate gradient method is, seemingly, an appropriate choice here, as it only approximately solves the Newton system
\begin{equation}
J \Delta = -F, \label{eq:newtsys}
\end{equation}
with variable $\Delta \in \reals^{3k}$; unfortunately, in our case, $J$ is nonsymmetric, so we appeal instead to the generalized minimum residual method (GMRES) \citep{saad1986gmres}.  We run GMRES until
\begin{equation}
\| F + J \hat \Delta \|_2 \leq \varepsilon \| F \|_2, \label{eq:trunc}
\end{equation}
where $\hat \Delta$ is the approximate solution from a particular iteration of GMRES, and $\varepsilon$ is a user-defined tolerance; \ie, we run GMRES until the approximation error is acceptable.  After GMRES computes an approximate Newton step, we use backtracking line search to compute a step size.

Now recall, from Section \ref{sec:bkgd}, that $\Delta^\star = 0$ is always a trivial solution to the Newton system \eqref{eq:newtsys}, due to homogeneity; so, we initialize the $\tilde u_\tau, \; u_\tau, \; v_\kappa$-components of $z$ to 1, which avoids converging to the trivial solution.  Finally, we mention that when $\mathcal{K}$, in the cone program \eqref{eq:coneprog}, is the direct product of several proper cones, then $J_u$, in \eqref{eq:Ju}, simply consists of multiple such matrices, just stacked vertically.

We describe the entire method in Algorithm \ref{alg:ours}.  The method has essentially no tuning parameters, since, for all the experiments, we just fix the maximum number of Newton iterations $T = 100$; the backtracking line search parameters $\alpha = 0.001, \; \beta = 0.5$; and the GMRES tolerances $\varepsilon^{(i)} = 1/(i+1)$, for each Newton iteration $i$.  The cost of each Newton iteration is the number of backtracking line search iterations times the sum of two costs: the cost of projecting onto a dual cone and the cost of GMRES, \ie, $O(\max\{n^2,m^2\})$, assuming GMRES returns early.  Similarly, the cost of each ADMM iteration of SCS is the cost of projecting onto a dual cone plus $O(\max\{n^2,m^2\})$.

\begin{algorithm}[tb]
\caption{Newton-ADMM for convex optimization}
\label{alg:ours}
\begin{algorithmic}
	\STATE {\bf Input:} problem data $c \in \reals^n, \; \mathcal{A} \in \reals^{m \times n}, \; b \in \reals^m$; cones $\mathcal{K}$; maximum number of Newton iterations $T$; backtracking line search parameters $\alpha \in (0,1/2), \; \beta \in (0,1)$; GMRES approximation tolerances $(\varepsilon^{(i)})_{i=1}^T$
	\STATE {\bf Output:} a solution to \eqref{eq:coneprog2}
	\STATE \textbf{initialize} $\tilde u^{(1)} = u^{(1)} = v^{(1)} = 0$ and $\tilde u^{(1)}_\tau = u^{(1)}_\tau = v^{(1)}_\kappa = 1$ \quad // avoids trivial solution
	\STATE \textbf{initialize} $z^{(1)} = (\tilde u^{(1)}, u^{(1)}, v^{(1)})$
	\FOR{$i=1,\ldots,T$}
	
	\STATE \textbf{compute} $J(z^{(i)}), \; F(z^{(i)})$ \quad // see \eqref{eq:F}, \eqref{eq:J}, Sec.~\ref{sec:jacobs}
	\STATE \textbf{compute} the Newton step $\Delta^{(i)}$, \ie, by approximately solving $J(z^{(i)}) \Delta^{(i)} = - F(z^{(i)})$ using GMRES with approximation tolerance $\varepsilon^{(i)}$ \quad // see \eqref{eq:trunc}
	
	\STATE \textbf{initialize} $t^{(i)} = 1$ \quad // initialize step size $t^{(i)}$
	\WHILE{$\| F(z^{(i)} + t^{(i)} \Delta^{(i)}) \|_2^2 \geq (1 - \alpha t^{(i)}) \| F(z^{(i)}) \|_2^2$}
	\STATE $t^{(i)} = \beta t^{(i)}$ \quad // for backtracking line search
	\ENDWHILE
	
	\STATE \textbf{update} $z^{(i+1)} = z^{(i)} + t^{(i)} \Delta^{(i)}$
	\ENDFOR
	
	\STATE \textbf{return} the $u_x$- divided by the $u_\tau$-components of $z^{(T)}$
\end{algorithmic}
\end{algorithm}

\subsection{Jacobians of projections onto dual cones}
\label{sec:jacobs}
Here, we derive the Jacobians of projections onto the dual cones of the nonnegative orthant, second-order cone, positive semidefinite cone, and the exponential cone; here, we write $J_{P_{\mathcal{K}^*}}$ to mean $J_{P_{\mathcal{K}^*}}(z)$, where  $z = \tilde u_y - v_s \in \reals^m$.

\paragraph{Nonnegative orthant.}
Since the nonnegative orthant is self-dual, we can simply find a subgradient of each component in \eqref{eq:projnno}, to get that $J_{P_{\mathcal{K}^*}}$ is diagonal with, say, $(J_{P_{\mathcal{K}^*}})_{ii}$ set to 1 if $( \tilde u_y - v_s )_i \geq 0$ and 0 otherwise, for $i=1,\ldots,m$.

\paragraph{Second-order cone.}
Write $z = (z_1, z_2), \; z_1 \in \reals^{m-1}, \; z_2 \in \reals$.  The second-order cone is self-dual, as well, so we can find subgradients of \eqref{eq:projsoc}, to get that
\begin{equation}
J_{P_{\mathcal{K}^*}} = 
\begin{cases}
0, & \| z_1 \|_2 \leq - z_2 \\
I, & \| z_1 \|_2 \leq z_2 \\
D, & \textrm{otherwise}, 
\end{cases}
\label{eq:jacprojsoc}
\end{equation}
where $D$ is a low-rank matrix (details in the supplement).

\paragraph{Positive semidefinite cone.}
The projection map onto the (self-dual) positive semidefinite cone is matrix-valued, so computing the Jacobian is more involved.  We leverage the fact that most implementations of GMRES need only the product $J_{P_{\mathcal{K}^*}} (\vect Z)$, provided by the below lemma using matrix differentials \citep{magnus1995matrix}; here, $\vect$ is the vectorization of a real, symmetric matrix $Z$.
\begin{lemma} \label{lem:jacprojpsd}
Let $Z = Q \Lambda Q^T$ be the eigenvalue decomposition of $Z$, and let $\tilde Z$ be a real, symmetric matrix.  Then
\begin{align*}
& J_{P_{\mathcal{K}_{\textrm{psd}}}}(\vect Z) (\vect \tilde Z) \\
& \quad\quad = \vect \left( (d Q) \max(\Lambda, 0) Q^T + Q (d \max(\Lambda, 0)) Q^T + Q \max(\Lambda, 0) (d Q)^T \right),
\end{align*}
where, here, the $\max$ is interpreted diagonally;
\begin{align*}
d Q_i & = (\Lambda_{ii} I - Z)^+ \tilde Z Q_i; \\
\left[ d \max( \Lambda, 0 ) \right]_{ii} & = I_+(\Lambda_{ii}) Q_i^T \tilde Z Q_i;
\end{align*}
$Z^+$ denotes the pseudo-inverse of $Z$; and $I_+(\cdot)$ is the indicator function of the nonnegative orthant.
\end{lemma}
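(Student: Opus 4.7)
The plan is to differentiate $P_{\mathcal{K}_{\textrm{psd}}}(Z) = Q \max(\Lambda,0) Q^T$ term by term via the product rule for matrix differentials, then express each piece $dQ$ and $d\max(\Lambda,0)$ in terms of $\tilde Z = dZ$. Concretely, I would start from
\begin{equation*}
dP = (dQ)\max(\Lambda,0) Q^T + Q\,(d\max(\Lambda,0))\,Q^T + Q\max(\Lambda,0)(dQ)^T,
\end{equation*}
and then reduce to computing the first-order perturbations of the eigendecomposition under the symmetric input $\tilde Z$.

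To get $dQ$ and $d\Lambda$, I would differentiate $Z = Q\Lambda Q^T$ and then sandwich by $Q^T(\cdot)Q$, exploiting $Q^T Q = I$ (so $S := Q^T dQ$ is skew-symmetric). This yields
\begin{equation*}
Q^T \tilde Z Q = S\Lambda - \Lambda S + d\Lambda.
\end{equation*}
Reading off the diagonal gives $d\Lambda_{ii} = Q_i^T \tilde Z Q_i$, and the off-diagonal entries give $S_{ji} = (Q_j^T \tilde Z Q_i)/(\Lambda_{ii}-\Lambda_{jj})$ whenever $\Lambda_{ii}\neq \Lambda_{jj}$. Substituting into $dQ_i = QSe_i$ and recognizing the spectral expansion
\begin{equation*}
(\Lambda_{ii} I - Z)^+ \;=\; \sum_{j:\,\Lambda_{jj}\neq \Lambda_{ii}} \frac{1}{\Lambda_{ii}-\Lambda_{jj}}\, Q_j Q_j^T
\end{equation*}
gives exactly $dQ_i = (\Lambda_{ii}I - Z)^+ \tilde Z Q_i$, matching the statement. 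For $d\max(\Lambda,0)$, which is diagonal, I would apply the chain rule on each scalar $\max(\Lambda_{ii},0)$; since the Clarke subgradient of $t\mapsto \max(t,0)$ at any point is $I_+(t)$ (taking the value $1$ on the open half-line, $0$ on the negative half-line, and any value in $[0,1]$ at the kink), I pick the element $I_+(\Lambda_{ii})$ and compose with $d\Lambda_{ii}$ to obtain $I_+(\Lambda_{ii}) Q_i^T \tilde Z Q_i$.

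The main obstacle is that the formula for $S_{ji}$ blows up whenever $Z$ has repeated eigenvalues, and, separately, $\max(\cdot,0)$ is only semismooth at zero. I would resolve the first issue by noting that the eigenvectors inside a repeated eigenspace are only defined up to rotation, but the ambiguous components of $dQ_i$ live in $\mathrm{span}\{Q_j : \Lambda_{jj}=\Lambda_{ii}\}$ and are annihilated upon reassembly into $Q\max(\Lambda,0)Q^T$, since within that eigenspace $\max(\Lambda,0)$ acts as a scalar. The pseudo-inverse notation $(\Lambda_{ii}I-Z)^+$ makes this automatic: it projects out the ambiguous directions and retains only the well-defined ones. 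The second issue is handled by the earlier results cited in Lemma~\ref{lem:projsemi}: strong semismoothness of $P_{\mathcal{K}_{\textrm{psd}}}$ ensures that the expression above, built from an admissible choice of subgradient at each kink, indeed represents an element of the generalized Jacobian in the sense of \eqref{eq:genjacob}. Finally, I would vectorize by applying $\vect$ to $dP$ to match the statement.
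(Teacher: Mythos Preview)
Your proposal is correct and follows essentially the same route as the paper: apply the product rule to $P_{\mathcal{K}_{\textrm{psd}}}(Z)=Q\max(\Lambda,0)Q^T$, then supply the eigen-perturbation formulas for $dQ_i$ and $d\Lambda_{ii}$ and compose with the chain rule for $\max(\cdot,0)$. The only difference is that the paper simply cites \citet[Chapter~8]{magnus1995matrix} for the expressions $dQ_i=(\Lambda_{ii}I-Z)^+\tilde Z Q_i$ and $d\Lambda_{ii}=Q_i^T\tilde Z Q_i$, whereas you derive them from scratch via the skew-symmetric $S=Q^T dQ$ and also discuss the repeated-eigenvalue and kink-at-zero subtleties that the paper leaves implicit.
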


\paragraph{Exponential cone.}
Recall, from \eqref{eq:projexp}, that the projection onto the exponential cone is not analytic, so computing the Jacobian is much more involved, as well.  The following lemma provides a Newton method for computing the Jacobian, using the KKT conditions for \eqref{eq:projexp} and differentials.
\begin{lemma} \label{lem:jacprojexp}
Let $z \in \reals^3$.  Then $J_{P_{\mathcal{K}_{\textrm{exp}}^*}}(z) = I - J_{P_{\mathcal{K}_{\textrm{exp}}}}(-z)$, where
\begin{align*}
J_{P_{\mathcal{K}_{\textrm{exp}}}}(z) =
\begin{cases}
I, & z \in \mathcal{K}_{\textrm{exp}} \\
-I, & z \in \mathcal{K}_{\textrm{exp}}^* \\
\diag(1, I_+(z_2), I_+(z_3)), & z_1, z_2 < 0;
\end{cases}
\end{align*}
otherwise, $J_{P_{\mathcal{K}_{\textrm{exp}}}}(z)$ is a particular 3x3 matrix given in the supplement, due to space constraints.
\end{lemma}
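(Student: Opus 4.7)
The identity $J_{P_{\mathcal{K}_{\textrm{exp}}^*}}(z) = I - J_{P_{\mathcal{K}_{\textrm{exp}}}}(-z)$ falls out immediately from Moreau's decomposition \eqref{eq:moreau} and the chain rule applied to the inner map $z \mapsto -z$, so all the real work lies in computing an element of the generalized Jacobian of $P_{\mathcal{K}_{\textrm{exp}}}$ itself. The plan is to proceed case by case, matching the four pieces of the piecewise projection formula \eqref{eq:projexp}; semismoothness of the projection (Lemma \ref{lem:projexpsemi}) will ensure that at the measure-zero boundaries between cases any element of $\mathcal{J}$ is admissible, so I will not need to worry about uniqueness, only about membership in $\mathcal{J}$.

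Three of the four pieces can be read off directly. When $z \in \inte \mathcal{K}_{\textrm{exp}}$, the projection equals the identity on a whole neighborhood of $z$, so $J_{P_{\mathcal{K}_{\textrm{exp}}}}(z) = I$. When $z$ lies in the interior of the region where the projection vanishes (the ``$-z \in \mathcal{K}_{\textrm{exp}}^*$'' branch of \eqref{eq:projexp}), $P_{\mathcal{K}_{\textrm{exp}}}$ is identically zero on a neighborhood, and one simply reads off the diagonal matrix claimed in the lemma. When $z_1, z_2 < 0$, the projection is the coordinatewise expression $(z_1, \max\{z_2,0\}, \max\{z_3,0\})$, and taking a subgradient in each coordinate (which is a legitimate row of an element of $\mathcal{J}$ by Lemma \ref{lem:subg}) yields the diagonal matrix $\diag(1, I_+(z_2), I_+(z_3))$.

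The genuinely nontrivial case is the remaining generic one, where $\tilde z^\star = P_{\mathcal{K}_{\textrm{exp}}}(z)$ is defined only implicitly as the optimizer of \eqref{eq:projexp}. Here I would introduce a Lagrange multiplier $\mu$ for the constraint $\tilde z_2 \exp(\tilde z_1/\tilde z_2) = \tilde z_3$ and write down the four KKT equations in the four unknowns $(\tilde z^\star, \mu)$, parameterised smoothly by $z$. Provided the KKT Jacobian with respect to $(\tilde z, \mu)$ is nonsingular (which can be verified from the strict convexity of the objective and the smoothness of the equality constraint away from $\tilde z_2 = 0$), the implicit function theorem applies, and differentiating the KKT system with respect to $z$ in the matrix-differential style of Lemma \ref{lem:jacprojpsd} produces a $4 \times 4$ linear system for $d(\tilde z^\star, \mu)/dz$; the first three rows of its solution are the promised $3 \times 3$ Jacobian.

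The main obstacle I anticipate is precisely this last algebraic step: inverting the $4 \times 4$ KKT Jacobian symbolically and massaging the result into a reasonably compact $3 \times 3$ expression in $z$, $\tilde z^\star$, and $\mu$. Everything else is bookkeeping — the Moreau reduction, the three piecewise cases, and the appeal to semismoothness for the boundary — but the implicit-function calculation is where the heavy algebra (deferred, as the lemma notes, to the supplement) lives.
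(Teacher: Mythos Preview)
Your proposal is correct and follows essentially the same route as the paper: reduce to $J_{P_{\mathcal{K}_{\textrm{exp}}}}$ via Moreau, read off the three piecewise cases directly, and for the generic case write the KKT conditions of \eqref{eq:projexp} with a single multiplier and differentiate to obtain a $4\times 4$ linear system whose inverse's upper-left $3\times 3$ block is the Jacobian. The only cosmetic difference is that the paper phrases the last step in the language of matrix differentials rather than invoking the implicit function theorem by name, but the computation is identical.
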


\section{Convergence guarantees}
\label{sec:theory}
Here, we give some convergence results for Newton-ADMM, the method presented in Algorithm \ref{alg:ours}.

First, we show that, under standard regularity assumptions, the iterates $(z^{(i)})_{i=1}^\infty$ generated by Algorithm \ref{alg:ours} are \textit{globally convergent}, \ie, given some initial point, the iterates converge to a solution of $F(z) = 0$, where $i$ is a Newton iteration counter.  We break the statement (and proof) of the result up into two cases.  Theorem \ref{thm:global1} establishes the result, when the sequence of step sizes $(t^{(i)})_{i=1}^\infty$ converges to some number bounded away from zero and one.  Theorem \ref{thm:global2} establishes the result when the step sizes converge zero.

Below, we state our regularity conditions, which are similar to those given in \citet{han1992globally, martinez1995inexact, facchinei1996inexact}; we elaborate in the supplement.

\begin{itemize}
\item[\textbf{A1.}] For Theorem \ref{thm:global1}, we assume $\limsup_{i \to \infty} t^{(i)} < 1$.

\item[\textbf{A2.}] For Theorem \ref{thm:global2}, we assume $\limsup_{i \to \infty} t^{(i)} = 0$.

\item[\textbf{A3.}] For Theorem \ref{thm:global2}, we assume (i) that the GMRES approximation tolerances $\varepsilon^{(i)}$ are uniformly bounded by $\varepsilon$ as in $\varepsilon^{(i)} \leq \varepsilon < 1 - \alpha^{1/2}$, (ii) that $(\varepsilon^{(i)})_{i=1}^\infty \to 0$, and (iii) that $\varepsilon^{(i)} = O( \| F(z^{(i)}) \|_2 )$.

\item[\textbf{A4.}] For Theorem \ref{thm:global2}, we assume, for every convergent sequence $(z^{(i)})_{i=1}^\infty \to z$, $(\gamma^{(i)})_{i=1}^\infty$ satisfying assumption (A2) above, and $(\Delta^{(j)})_{j=1}^\infty \to \Delta$, that
\begin{align*}
& \lim_{\substack{i,j \to \infty}} \frac{\| F(z^{(i)} + \gamma^{(i)} \Delta^{(j)}) \|_2^2 - \| F(z^{(i)}) \|_2^2}{\gamma^{(i)}} \leq \lim_{\substack{i,j \to \infty}} \alpha^{1/2} F(z^{(i)})^T \hat F(z^{(i)}, \Delta^{(j)}),
\end{align*}
where, for notational convenience, we write
\[
\hat F(z^{(i)}, \Delta^{(j)}) = J(z^{(i)}) \Delta^{(j)}.
\]

\item[\textbf{A5.}] For Theorem \ref{thm:global2}, we assume, for all $z \in \reals^{3k}$ and $\Delta \in \reals^{3k}$, and for some $C_2 > 0$, that
\[
C_2 \| \Delta \|_2 \leq \| \hat F(z, \Delta) \|_2.
\]

\item[\textbf{A6.}] For Theorem \ref{thm:local}, we assume, for all $z \in \reals^{3k}, \; J(z) \in \mathcal{J}(z)$, (i) that $\| J(z) \|_2 \leq C_3$, for some constant $C_3 > 0$; and (ii) that every element of $\mathcal{J}(z)$ is invertible.
\end{itemize}

The two global convergence results are given below; the proofs are based on arguments in \citet[Theorem 5a]{martinez1995inexact}, but we use fewer user-defined parameters, and a different line search method.

\begin{theorem}[Global convergence, with $\limsup_{i \to \infty} t^{(i)} = t$, for some $0 < t < 1$]
\label{thm:global1}
Assume condition (A1) stated above.  Then $\lim_{i \to \infty} F(z^{(i)}) = 0$.
\end{theorem}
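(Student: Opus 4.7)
The plan is a standard Armijo-type contradiction argument adapted to the residual-squared merit function $\|F(z)\|_2^2$. The backtracking rule in Algorithm \ref{alg:ours} is exited only when
\[
\|F(z^{(i+1)})\|_2^2 \;\le\; (1-\alpha t^{(i)})\,\|F(z^{(i)})\|_2^2,
\]
so, provided the line search terminates at every iteration, the merit sequence $\bigl(\|F(z^{(i)})\|_2^2\bigr)_{i\ge 1}$ is nonincreasing and bounded below by zero, hence converges to some $L\ge 0$. The goal reduces to ruling out $L>0$.

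Assume for contradiction that $L>0$. Assumption (A1) together with the parenthetical in the theorem statement gives a subsequence $(i_k)$ with $t^{(i_k)} \to t^\star$ for some $0<t^\star<1$. Applied along this subsequence, the Armijo inequality reads $\|F(z^{(i_k+1)})\|_2^2 \le (1-\alpha t^{(i_k)})\|F(z^{(i_k)})\|_2^2$; because the full merit sequence converges to $L$, the same limit is attained by both $\|F(z^{(i_k+1)})\|_2^2$ and $\|F(z^{(i_k)})\|_2^2$. Passing to the limit yields $L \le (1-\alpha t^\star)L$, i.e.\ $\alpha t^\star L \le 0$, contradicting $\alpha t^\star>0$ and $L>0$. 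Hence $L=0$ and $F(z^{(i)})\to 0$.

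The main obstacle is the preliminary step of justifying line-search termination with a strictly positive $t^{(i)}$; once this is in hand, the limsup calculation is immediate. For termination, the GMRES truncation rule \eqref{eq:trunc} guarantees that $\hat F(z^{(i)},\Delta^{(i)}) = J(z^{(i)})\Delta^{(i)} = -F(z^{(i)}) + r^{(i)}$ with $\|r^{(i)}\|_2 \le \varepsilon^{(i)}\|F(z^{(i)})\|_2$. Local Lipschitzness of $F$ (Lemma \ref{lem:Fsemi}) and the semismooth estimate $\|F(z^{(i)}+t\Delta^{(i)}) - F(z^{(i)}) - t\,\hat F(z^{(i)},\Delta^{(i)})\|_2 = o(t)$ yield
\[
\|F(z^{(i)}+t\Delta^{(i)})\|_2^2 \;\le\; \bigl(1 - 2t(1-\varepsilon^{(i)})\bigr)\|F(z^{(i)})\|_2^2 + o(t), \qquad t\to 0^+.
\]
Because $\alpha<1/2$ and $\varepsilon^{(i)} = 1/(i+1) < 1/2$ for $i\ge 1$, we have $\alpha < 2(1-\varepsilon^{(i)})$, so a strictly positive step size $t^{(i)} = \beta^{k_i}$ satisfying the acceptance condition exists after finitely many halvings. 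This makes every iteration of Algorithm \ref{alg:ours} well defined and legitimizes the monotone descent used above, completing the argument.
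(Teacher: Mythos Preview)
Your core argument---monotone descent of $\|F(z^{(i)})\|_2^2$ from the Armijo acceptance rule, convergence of this sequence to some $L\ge 0$, and a subsequence contradiction driven by (A1)---is correct and essentially identical to the paper's proof; the paper merely phrases the last step as squeezing $\alpha t^{(i)}\|F(z^{(i)})\|_2^2$ between zero and the telescoping difference $\|F(z^{(i)})\|_2^2-\|F(z^{(i+1)})\|_2^2\to 0$, which is the same computation.

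Your final paragraph on line-search termination goes beyond what the paper attempts under (A1) alone (the paper establishes termination separately, via assumption (A4), in the context of Theorem~\ref{thm:global2}). That said, the argument as written has a gap: the claimed estimate $\|F(z^{(i)}+t\Delta^{(i)}) - F(z^{(i)}) - t\,J(z^{(i)})\Delta^{(i)}\|_2 = o(t)$ is \emph{not} the semismoothness estimate. Semismoothness places the generalized Jacobian at the perturbed point $z^{(i)}+t\Delta^{(i)}$, not at $z^{(i)}$; what you would actually need here is that $J(z^{(i)})\Delta^{(i)}$ equals the directional derivative $F'(z^{(i)};\Delta^{(i)})$, which is not guaranteed for an arbitrary element of $\mathcal{J}(z^{(i)})$. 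Since the paper's own proof simply takes termination for granted under (A1), this does not put your proof behind the paper's, but the paragraph is not rigorous as stated.
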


\begin{theorem}[Global convergence, with $\limsup_{i \to \infty} t^{(i)} = 0$]
\label{thm:global2}
Assume conditions (A2), (A3), (A4), and (A5) stated above.  Suppose the sequence $(z^{(i)})_{i=1}^\infty$ converges to some $z \in \reals^{3k}$.  Then $F(z) = 0$.
\end{theorem}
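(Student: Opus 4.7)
The approach is to argue by contradiction: suppose $F(z) \neq 0$. The backtracking failure on the most recently rejected trial step, combined with (A4), (A5), and the GMRES stopping rule, will force the impossible inequality $\alpha^{1/2} \leq \alpha$, contradicting $\alpha \in (0, 1/2)$.

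First, since $t^{(i)} \to 0$ by (A2), for all sufficiently large $i$ the initial trial $t = 1$ is rejected, so $\gamma^{(i)} := t^{(i)}/\beta$ is a value just tested and rejected by the inner loop of Algorithm~\ref{alg:ours}. The rejection yields
\[
\| F(z^{(i)} + \gamma^{(i)} \Delta^{(i)}) \|_2^2 \geq (1 - \alpha \gamma^{(i)}) \| F(z^{(i)}) \|_2^2,
\]
equivalently
\[
\frac{\| F(z^{(i)} + \gamma^{(i)} \Delta^{(i)}) \|_2^2 - \| F(z^{(i)}) \|_2^2}{\gamma^{(i)}} \geq -\alpha \| F(z^{(i)}) \|_2^2,
\]
and $\gamma^{(i)} \to 0$, so $(\gamma^{(i)})$ meets the hypothesis of (A4). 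I next show $\{\Delta^{(i)}\}$ is bounded: the GMRES stopping rule \eqref{eq:trunc} together with (A5) and the triangle inequality gives
\[
C_2 \| \Delta^{(i)} \|_2 \leq \| \hat F(z^{(i)}, \Delta^{(i)}) \|_2 \leq (1 + \varepsilon^{(i)}) \| F(z^{(i)}) \|_2,
\]
and the right-hand side is bounded since $z^{(i)} \to z$ and $F$ is continuous. Passing to a subsequence, I may assume $\Delta^{(i)} \to \Delta$.

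With all sequences now convergent, I apply (A4) (identifying both indices with $i$) to the rearranged Armijo inequality to obtain
\[
-\alpha \| F(z) \|_2^2 \leq \alpha^{1/2} F(z)^T \hat F(z, \Delta).
\]
Because $\varepsilon^{(i)} \to 0$ by (A3)(ii), the GMRES bound forces $\hat F(z^{(i)}, \Delta^{(i)}) + F(z^{(i)}) \to 0$ and hence $\hat F(z, \Delta) = -F(z)$. Substituting gives $-\alpha \| F(z) \|_2^2 \leq -\alpha^{1/2} \| F(z) \|_2^2$, and if $F(z) \neq 0$, dividing by $-\|F(z)\|_2^2$ (flipping the inequality) yields $\alpha \geq \alpha^{1/2}$, equivalently $\alpha \geq 1$, contradicting $\alpha \in (0, 1/2)$. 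Hence $F(z) = 0$.

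The main obstacle, in my view, is identifying the right sequence to plug into (A4)---namely the rejected step $\gamma^{(i)} = t^{(i)}/\beta$, not the accepted one---and verifying that the limiting value of $\hat F$ can be pinned down without relying on continuity of $J$, which is where (A3)(ii) and the GMRES stopping criterion do the heavy lifting.
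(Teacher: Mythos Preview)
Your proof is correct and follows essentially the same route as the paper: bound $\Delta^{(i)}$ via (A5) and the GMRES residual, pass to a convergent subsequence, feed the rejected step $\gamma^{(i)}=t^{(i)}/\beta$ into (A4), and derive a contradiction from the resulting inequality on $\|F(z)\|_2^2$. The one substantive difference is in how you control the right-hand side of (A4): the paper uses Cauchy--Schwarz together with the uniform bound $\varepsilon^{(i)}\le\varepsilon<1-\alpha^{1/2}$ from (A3)(i) to obtain $-\alpha^{1/2}(1-\varepsilon)\|F(z)\|_2^2$, whereas you invoke (A3)(ii) ($\varepsilon^{(i)}\to 0$) to identify the limit of $\hat F(z^{(i)},\Delta^{(i)})$ as $-F(z)$ directly, yielding the cleaner bound $-\alpha^{1/2}\|F(z)\|_2^2$. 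Your version is a bit tidier and does not need the strict upper bound on $\varepsilon$; the paper's version would still go through even if $\varepsilon^{(i)}$ did not tend to zero. One cosmetic remark: writing ``$\hat F(z,\Delta)=-F(z)$'' is slightly imprecise, since $\hat F(z,\Delta)=J(z)\Delta$ depends on which $J(z)\in\mathcal J(z)$ one picks; what you actually establish (and all you need) is $\lim_i \hat F(z^{(i)},\Delta^{(i)})=-F(z)$, which suffices to evaluate the limit on the right of (A4).
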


Next, we show, in Theorem \ref{thm:local}, that when $F$ is strongly semismooth, \ie, $\mathcal{K}$ is the nonnegative orthant, second-order cone, or positive semidefinite cone, the iterates $(z^{(i)})_{i=1}^\infty$ generated by Algorithm \ref{alg:ours} are locally quadratically convergent; the proof is similar to that of \citet[Theorem 3.2b]{facchinei1996inexact}, for semismooth maps.

\begin{theorem}[Local quadratic convergence] \label{thm:local}
Assume condition (A6) stated above.  Then the sequence of iterates $(z^{(i)})_{i=1}^\infty \to z$ generated by Algorithm \ref{alg:ours} converges quadratically, with $F(z) = 0$, for large enough $i$.
\end{theorem}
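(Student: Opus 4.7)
The plan is to combine three ingredients: strong semismoothness of $F$ (by Lemma \ref{lem:Fsemi}, given that $\mathcal{K}$ is the nonnegative orthant, second-order, or positive semidefinite cone), the uniform boundedness and invertibility of the generalized Jacobian supplied by (A6), and the standard inexact semismooth Newton error recursion applied to the truncated GMRES step. I would assume, implicitly from the combination of Theorems \ref{thm:global1}/\ref{thm:global2}, that the iterates converge to some $z$ with $F(z) = 0$, so that the analysis is genuinely local around $z^\star := z$.

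First I would show that once $z^{(i)}$ is sufficiently close to $z^\star$, the unit step $t^{(i)} = 1$ is accepted by the backtracking line search, so that from that point on the algorithm reduces to the pure inexact semismooth Newton iteration $z^{(i+1)} = z^{(i)} + \hat\Delta^{(i)}$. Using (A6) to get a uniform bound $\| J(z)^{-1} \|_2 \le C_3'$ on a neighborhood of $z^\star$ (via upper semicontinuity of $\mathcal{J}$ together with the fact that every element is invertible), and strong semismoothness in the form
\[
\| F(z^{(i)} + \hat\Delta^{(i)}) - F(z^{(i)}) - J(z^{(i)}) \hat\Delta^{(i)} \|_2 = O(\| \hat\Delta^{(i)} \|_2^2),
\]
combined with the truncation bound \eqref{eq:trunc}, one obtains $\| F(z^{(i)} + \hat\Delta^{(i)}) \|_2 \le (\varepsilon^{(i)} + o(1)) \| F(z^{(i)}) \|_2$, which falls below $(1 - \alpha)^{1/2} \| F(z^{(i)}) \|_2$ for all large $i$, verifying the Armijo condition at $t^{(i)} = 1$.

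Next I would set up the error recursion. Writing the GMRES residual $r^{(i)} := F(z^{(i)}) + J(z^{(i)}) \hat\Delta^{(i)}$, with $\| r^{(i)} \|_2 \le \varepsilon^{(i)} \| F(z^{(i)}) \|_2$, and using $F(z^\star) = 0$, I would expand
\[
z^{(i+1)} - z^\star = J(z^{(i)})^{-1} \bigl[ J(z^{(i)}) (z^{(i)} - z^\star) - \bigl(F(z^{(i)}) - F(z^\star)\bigr) + r^{(i)} \bigr].
\]
Strong semismoothness bounds the bracketed semismoothness remainder by $O(\| z^{(i)} - z^\star \|_2^2)$; local Lipschitz continuity of $F$ (with constant $C_3$ inherited from the Jacobian bound in (A6)) gives $\| r^{(i)} \|_2 \le \varepsilon^{(i)} C_3 \| z^{(i)} - z^\star \|_2$. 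Provided $\varepsilon^{(i)} = O(\| F(z^{(i)}) \|_2) = O(\| z^{(i)} - z^\star \|_2)$ (the standard inexact-Newton tolerance needed for quadratic, as opposed to merely superlinear, convergence), the bound on $\| J(z^{(i)})^{-1} \|_2$ yields $\| z^{(i+1)} - z^\star \|_2 \le C \| z^{(i)} - z^\star \|_2^2$ for some constant $C$ and all sufficiently large $i$.

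The main obstacle is the first step, namely establishing the uniform boundedness of $\| J(z^{(i)})^{-1} \|_2$ in a neighborhood of $z^\star$: the generalized Jacobian \eqref{eq:genjacob} is a set-valued, convex-hull-of-limits construction that is only upper semicontinuous, so translating the pointwise invertibility assumed in (A6)(ii) into a uniform bound on the inverse along $(z^{(i)})$ requires a careful compactness/contradiction argument in the spirit of \citet{qi1993nonsmooth}. A secondary, more technical obstacle is dovetailing the GMRES truncation error with the Armijo test so that the step size is genuinely $1$ before the quadratic recursion is invoked; without this the line search introduces multiplicative factors $t^{(i)} < 1$ that degrade quadratic to only linear contraction.
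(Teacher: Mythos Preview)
Your proposal is correct and more self-contained than the paper's own proof. The paper takes a shorter route: it only establishes that the GMRES residual satisfies $\|\mathop{\bf res}(i)\|_2 \le C_5 \|\Delta^{(i)}\|_2^2$ (using (A6)(i) to bound $\|J\|_2$, the truncation bound \eqref{eq:trunc}, and the forcing condition $\varepsilon^{(i)} = O(\|F(z^{(i)})\|_2)$ from (A3)(iii), which it borrows without listing it among the hypotheses), and then immediately invokes \citet[Theorem~2.5]{facchinei1997nonsmooth} together with (A6)(ii) to conclude quadratic convergence. Your argument instead unfolds what that cited theorem does internally: the error recursion $z^{(i+1)}-z^\star = J(z^{(i)})^{-1}[\cdots]$, the uniform bound on $\|J^{-1}\|_2$ via upper semicontinuity of $\mathcal{J}$ and compactness, and the eventual acceptance of the unit step $t^{(i)}=1$ by the Armijo test. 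The paper's version is brisker but opaque about which assumptions are actually doing the work (in particular it never addresses the line search, and silently imports (A3)(iii)); your version is longer but makes all three ingredients explicit, and the two ``obstacles'' you flag are exactly the points the paper delegates to the external reference.
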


When $\mathcal{K}$ is the exponential cone, \ie, $F$ is semismooth, the iterates generated by Algorithm \ref{alg:ours} are locally superlinearly convergent \citep[Theorem 3.2b]{facchinei1996inexact}.

\section{Numerical examples}
\label{sec:exps}
Next, we present an empirical evaluation of Newton-ADMM, on several problems; in these, we directly compare to SCS, which Newton-ADMM builds on, as it is the most relevant benchmark for us (\citet{o2016conic} observe that, with an optimized implementation, SCS outperforms SeDuMi, as well as SDPT3).  We evaluate, for both methods, the time taken to reach the solution as well as the optimal objective value; we obtained these by running an interior point method \citep{andersen2011interior} to high accuracy.  Table \ref{tab:sizes} describes the problem sizes, for both the cone form of \eqref{eq:coneprog}, as well as the familiar form that the problem is usually written in. Later, we also describe extending Newton-ADMM to accelerate any ADMM-based algorithm, applied to \textit{any} convex problem; here, we compare to state-of-the-art baselines for specific problems.

\begin{table}
\caption{Problem sizes, for the cone form ($n,m$) of \eqref{eq:coneprog}, and the familiar form ($p,N$) that the problem is usually written in.}
\label{tab:sizes}
\begin{center}
\begin{sc}
\begin{tabular}{lccccc}
\hline
Problem & $n$ & $m$ & $p$ & $N$ & Cones \\
\hline
Linear prog. & 600 & 1,200 & 600 & 300 & \textnormal{$\mathcal{K}_{\textrm{no}}$} \\
Portfolio opt. & 2,501 & 2,504 & 2,500 & -- & \textnormal{$\mathcal{K}_{\textrm{soc}}, \mathcal{K}_{\textrm{no}}$} \\
Logistic reg. & 3,200 & 7,200 & 100 & 1,000 & \textnormal{$\mathcal{K}_{\textrm{exp}}, \mathcal{K}_{\textrm{no}}$} \\
Robust PCA & 4,376 & 8,103 & 25 & 25 & \textnormal{$\mathcal{K}_{\textrm{psd}}, \mathcal{K}_{\textrm{no}}$} \\
\hline
\end{tabular}
\end{sc}
\end{center}
\vskip -0.1in
\end{table}

\subsection{Random linear programs (LPs)}
We compare Newton-ADMM and SCS on a linear program
\begin{equation*}
\begin{array}{ll}
\minimizewrt{x \in \mathbf{R}^p} \quad c^T x \quad\quad \subjectto \quad Gx = h, \; x \geq 0,
\end{array}
\end{equation*}
where $c \in \reals^p, \; G \in \reals^{N \times p}, \; h \in \reals^N$ are problem data, and the inequality is interpreted elementwise.  To ensure primal feasibility, we generated a solution $x^\star$ by sampling its entries from a normal distribution, then projecting onto the nonnegative orthant; we generated $G$ (with $p=600, \; N=300$, so $G$ is wide) by sampling entries from a normal distribution, then taking $h = G x^\star$.  To ensure dual feasibility, we generated dual solutions $\nu^\star, \; \lambda^\star$, associated with the equality and inequality constraints, by sampling their entries from a normal and $\textrm{Uniform}(0,1)$ distribution, respectively; to ensure complementary slackness, we set $c = -G^T \nu^\star + \lambda^\star$.  Finally, to put the linear program into the cone form of \eqref{eq:coneprog}, and hence \eqref{eq:coneprog2}, we just take
\[
A = 
\left[
\begin{array}{c}
G \\
-G \\
I
\end{array}
\right], \quad
b = 
\left[
\begin{array}{c}
h \\
-h \\
0
\end{array}
\right], \quad
\mathcal{K} = \mathcal{K}_{\textrm{no}}.
\]

The first column of Figure \ref{fig:panel} presents the time taken, by both Newton-ADMM and SCS, to reach the optimal objective value, as well as to reach the solution; we see that Newton-ADMM outperforms SCS in both metrics.

\subsection{Minimum variance portfolio optimization}
We consider a \textit{minimum variance portfolio optimization} problem (see, \eg, \citet{khare2015convex,ali2016generalized}),
\begin{equation}
\minimizewrt{\theta \in \mathbf{R}^p} \quad \theta^T \Sigma \theta \quad\quad \subjectto \quad \ones^T \theta = 1,
\label{eq:portopt}
\end{equation}
where, here, the problem data $\Sigma \in \symm_{++}^p$ is the covariance matrix associated with the prices of $p=2,500$ assets; we generated $\Sigma$ by sampling a positive definite matrix.  The goal of the problem is to allocate wealth across $p$ assets such that the overall risk is minimized; shorting is allowed.  Putting the above problem into the cone form of \eqref{eq:coneprog} yields, for $\mathcal{K}$, the direct product of the second-order cone and the nonnegative orthant (details in the supplement).  The second column of Figure \ref{fig:panel} shows the results; we again see that Newton-ADMM outperforms SCS.

\subsection{$\ell_1$-penalized logistic regression}
We consider $\ell_1$-penalized logistic regression, \ie,
\begin{equation}
\begin{array}{ll}
\minimizewrt{\theta \in \mathbf{R}^p} \;\, \sum_{i=1}^N \log( 1 + \exp(y_i X_{i \cdot} \theta)) + \lambda \| \theta \|_1,
\end{array}
\label{eq:logreg}
\end{equation}
where, here, $y \in \reals^N$ here is a response vector; $X \in \reals^{N \times p}$ is a data matrix, with $X_{i \cdot}$ denoting the $i$th row of $X$; and $\lambda \geq 0$ is a tuning parameter.  We generated $p = 100$ sparse underlying coefficients $\theta^\star$, by sampling entries from a normal distribution, then setting $\approx90\%$ of the entries to zero; we generated $X$ (with $N = 1,000$) by sampling its entries from a normal distribution, then set $y = X \theta^\star + \delta$, where $\delta$ is (additive) Gaussian noise.  For simplicity, we set the tuning parameter $\lambda = 1$.  Putting the above problem into the cone form of \eqref{eq:coneprog} yields, for $\mathcal{K}$, the direct product of the exponential cone and the nonnegative orthant (details in the supplement); the problem size in cone form ends up being large (see Table \ref{tab:sizes}).  In the third column of Figure \ref{fig:panel}, we see that Newton-ADMM outperforms SCS.

\subsection{Robust principal components analysis (PCA)}
Finally, we consider robust PCA,
\begin{equation}
\minimizewrt{L, S \in \mathbf{R}^{N \times p}} \; \| L \|_* \;\; \subjectto \; \| S \|_1 \leq \lambda, \; L + S = X,
\label{eq:robust}
\end{equation}
where $\| \cdot \|_*$ and $\| \cdot \|_1$ are the nuclear and elementwise $\ell_1$-norms, respectively, and $X \in \reals^{N \times p}, \; \lambda \geq 0$ \citep[Equation 1.1]{candes2011robust}.  We generated a low-rank matrix $L^\star$, with rank $\approx \frac{1}{2}N$; a sparse matrix $S^\star$, by sampling entries from $\textrm{Uniform}(0,1)$, then setting $\approx90\%$ of the entries to zero; and finally set $X = L^\star + S^\star$.  We set $\lambda = 1$.  The goal is to decompose the obsevations $X$ into low-rank $L$ and sparse $S$ components.  Putting the above problem into the cone form of \eqref{eq:coneprog} yields, for $\mathcal{K}$, the direct product of the positive semidefinite cone and nonnegative orthant (details in the supplement).  We see that Newton-ADMM and SCS are comparable, in the fourth column of Figure \ref{fig:panel}.

\begin{figure*}[ht]
\vskip 0.2in
\begin{center}
\includegraphics[width=0.23\textwidth]{./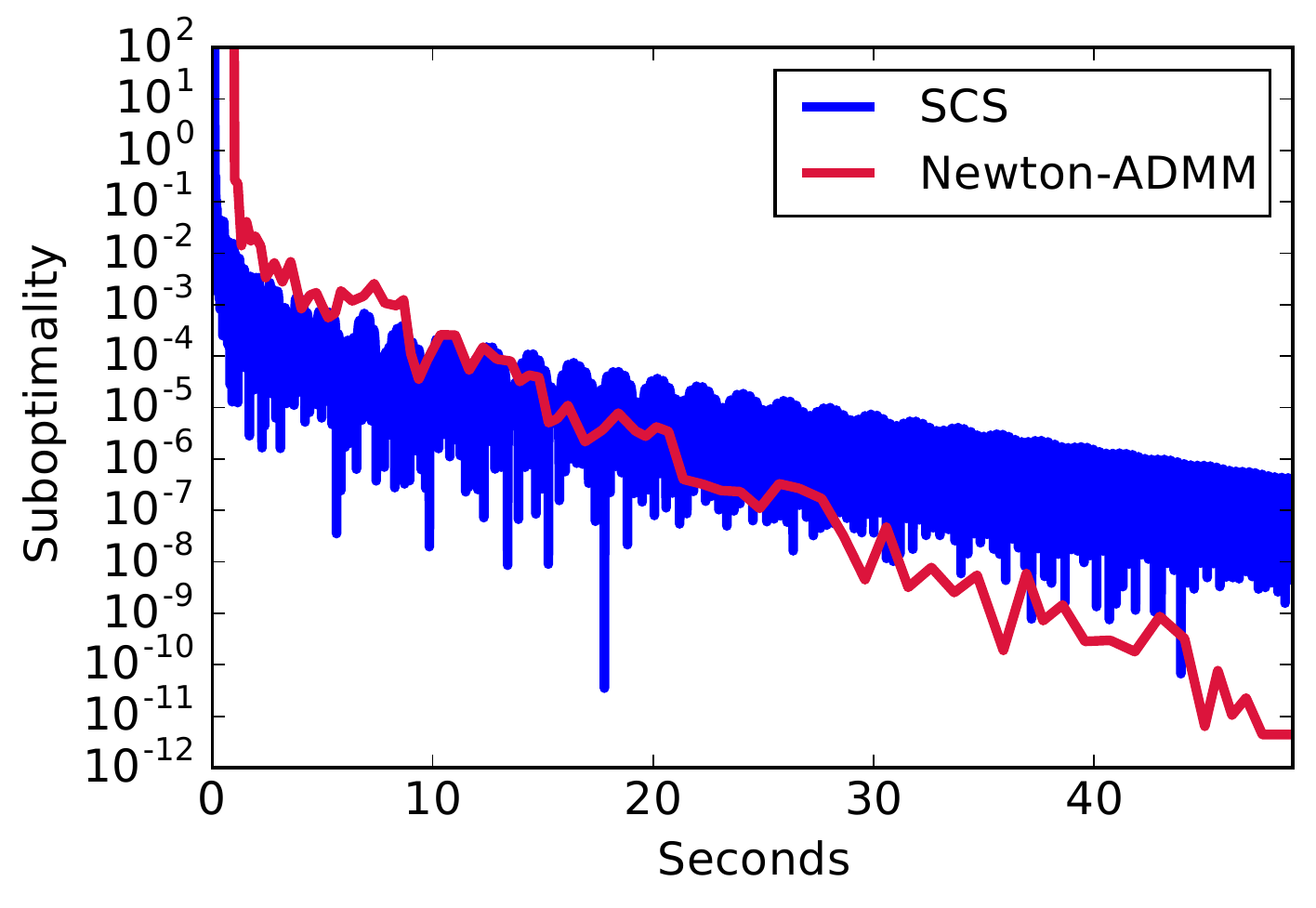}
\hfill
\includegraphics[width=0.23\textwidth]{./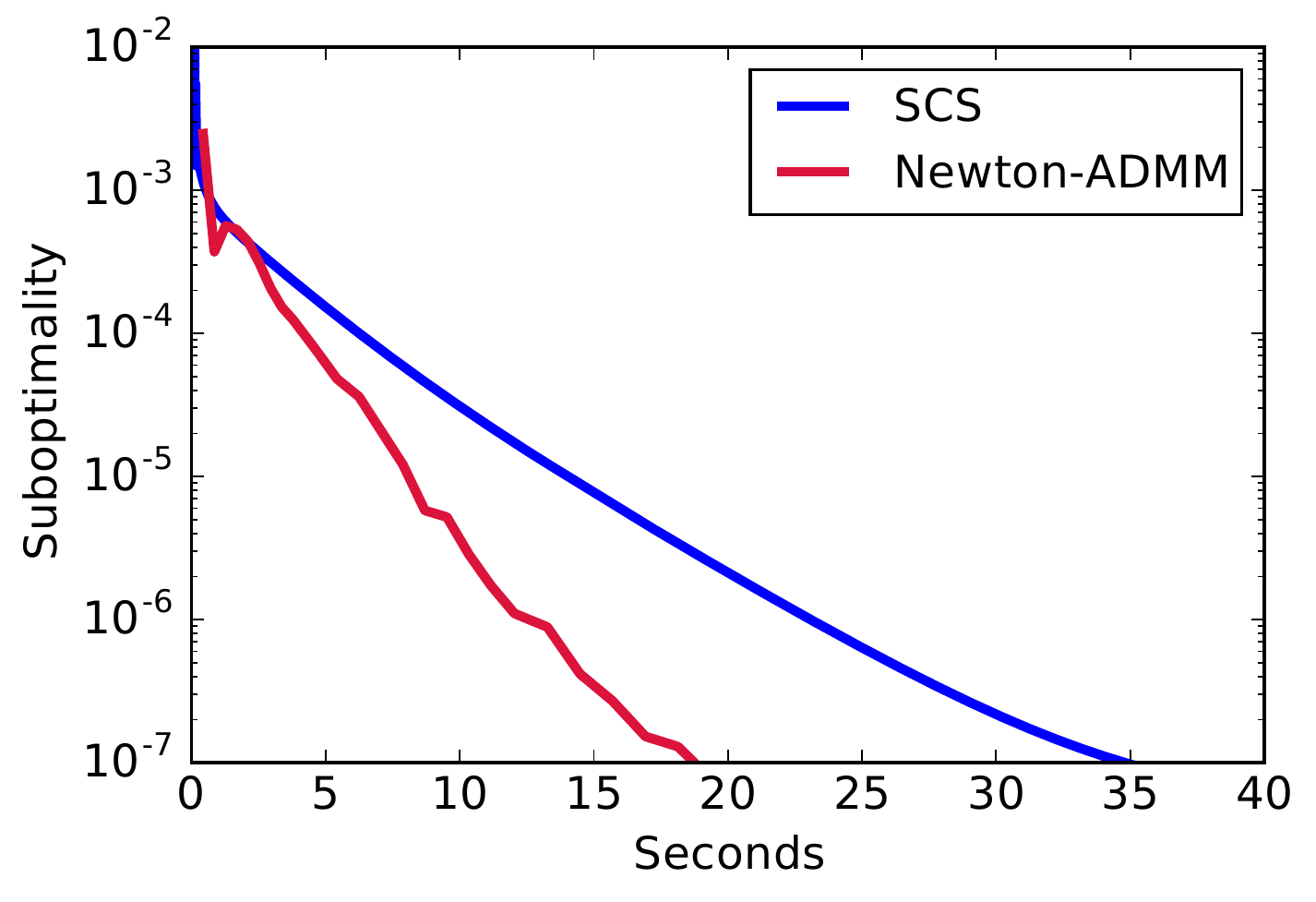}
\hfill
\includegraphics[width=0.23\textwidth]{./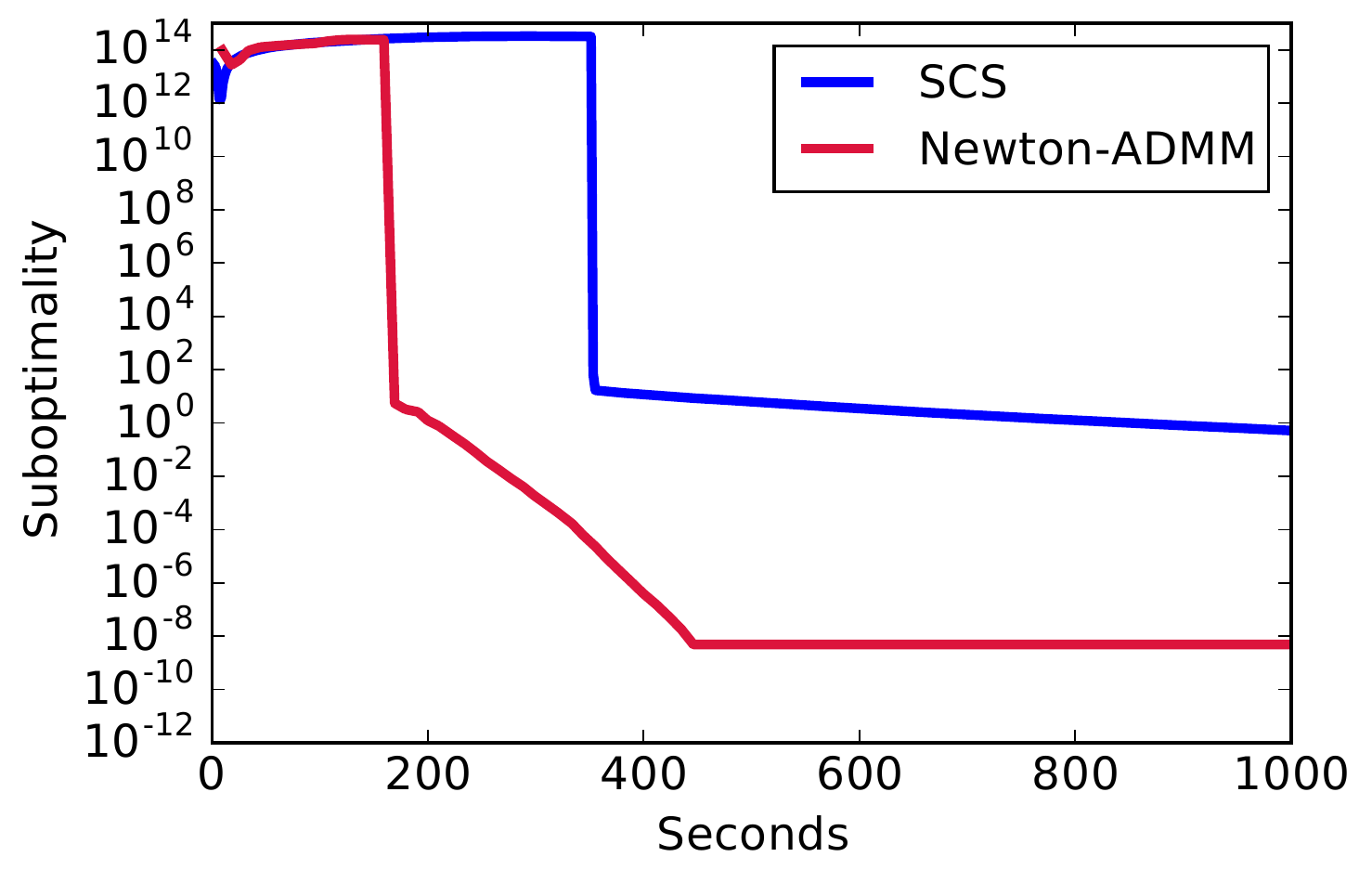}
\hfill
\includegraphics[width=0.23\textwidth]{./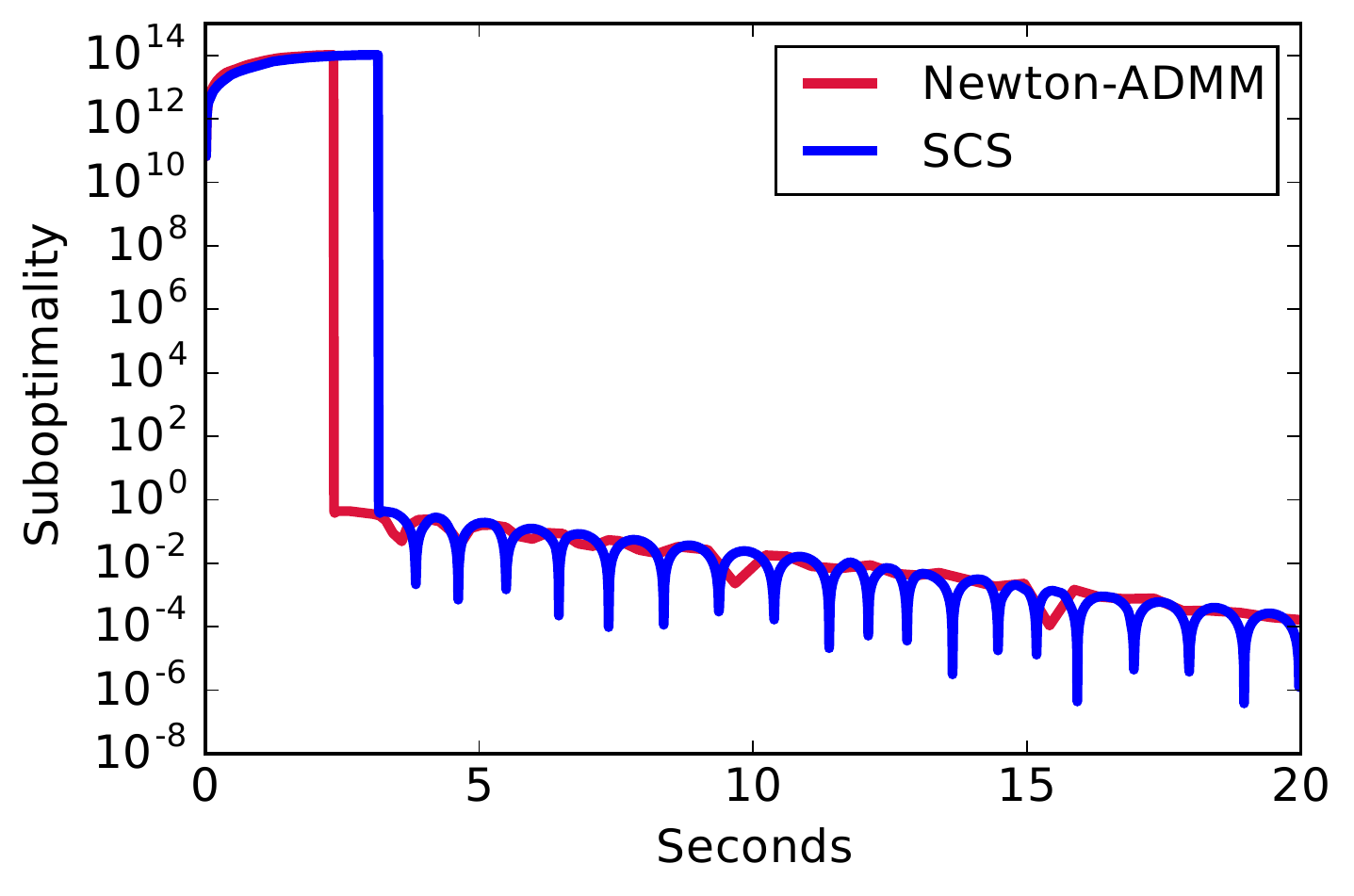}
\\
\includegraphics[width=0.23\textwidth]{./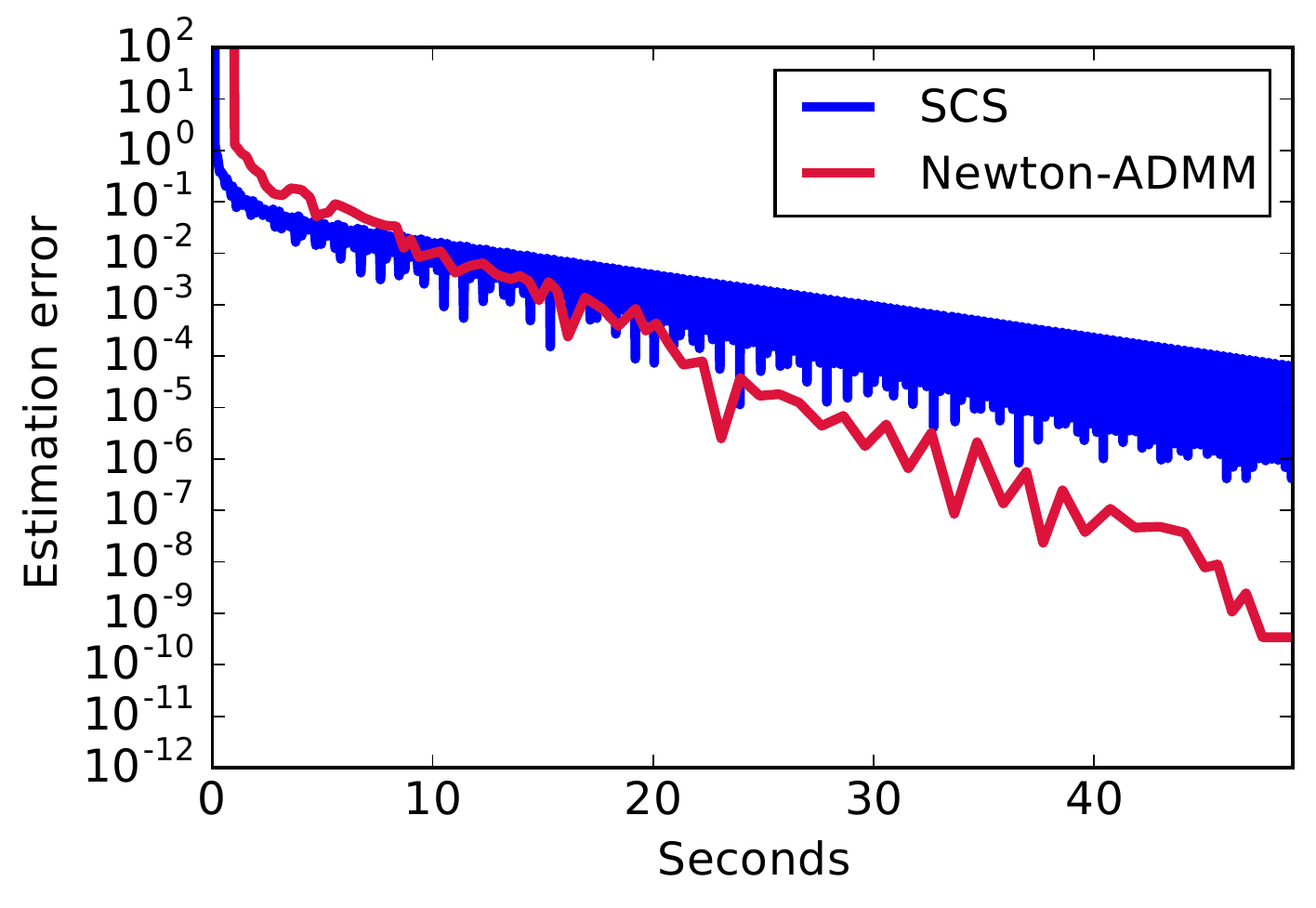}
\hfill
\includegraphics[width=0.23\textwidth]{./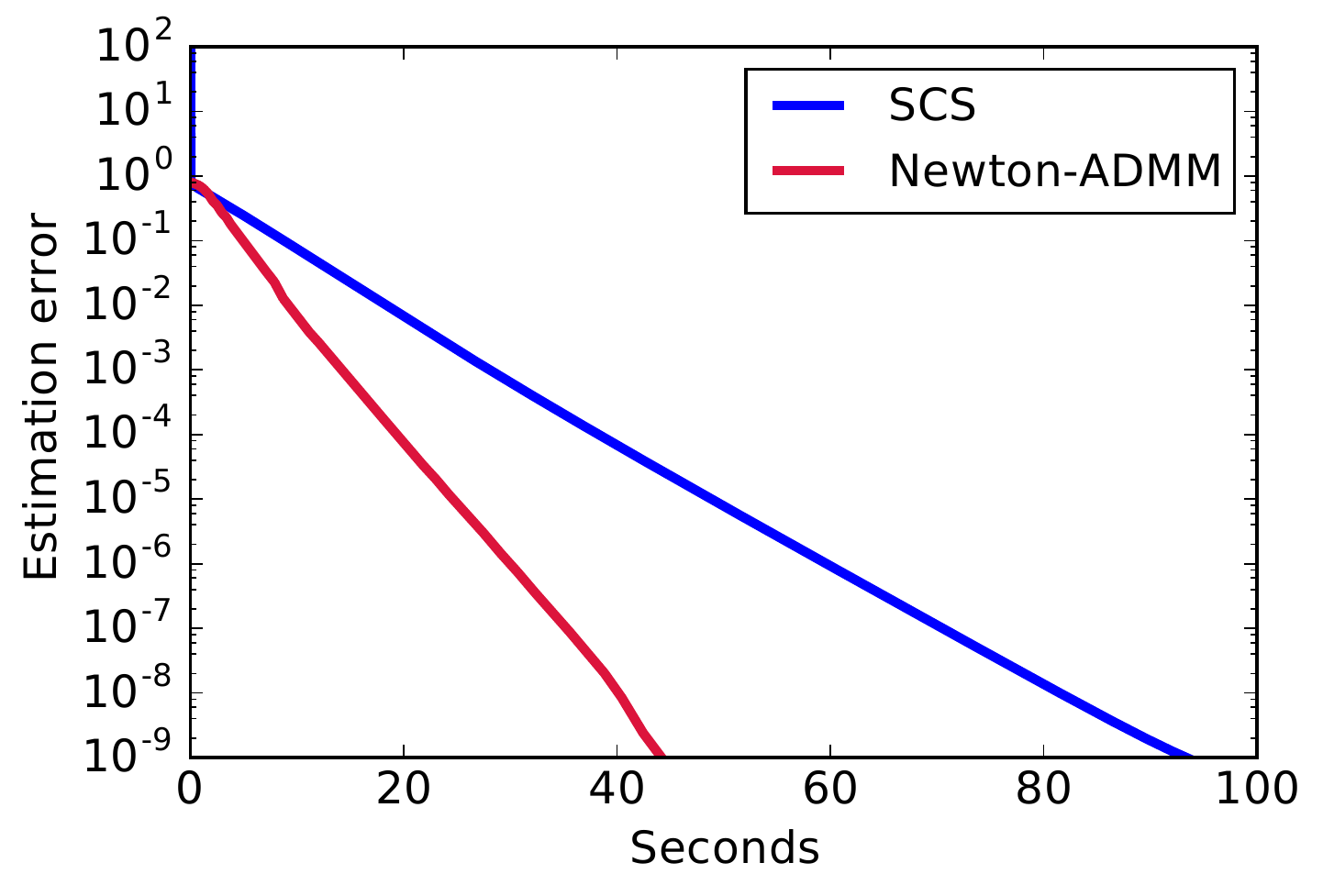}
\hfill
\includegraphics[width=0.23\textwidth]{./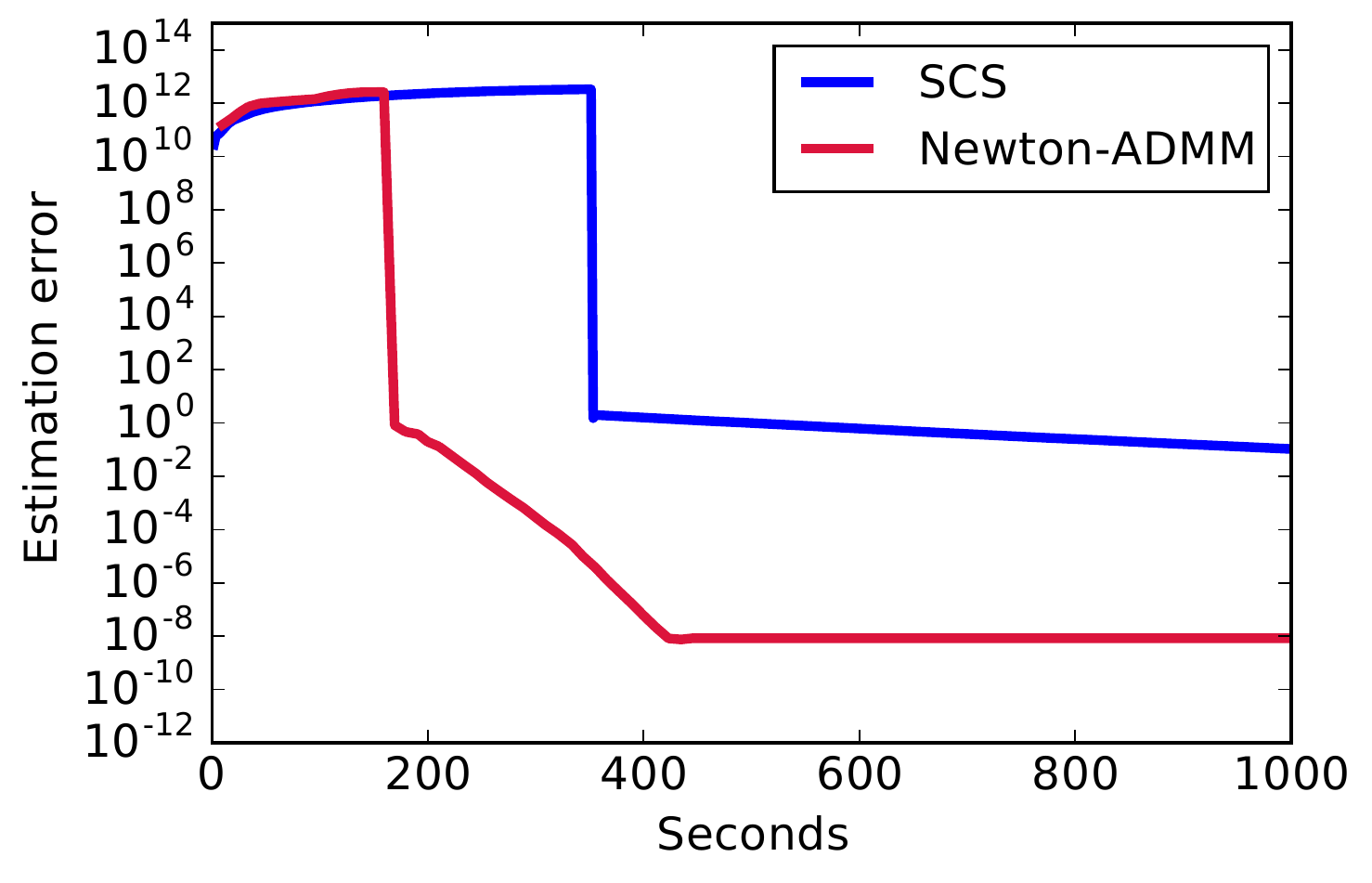}
\hfill
\includegraphics[width=0.23\textwidth]{./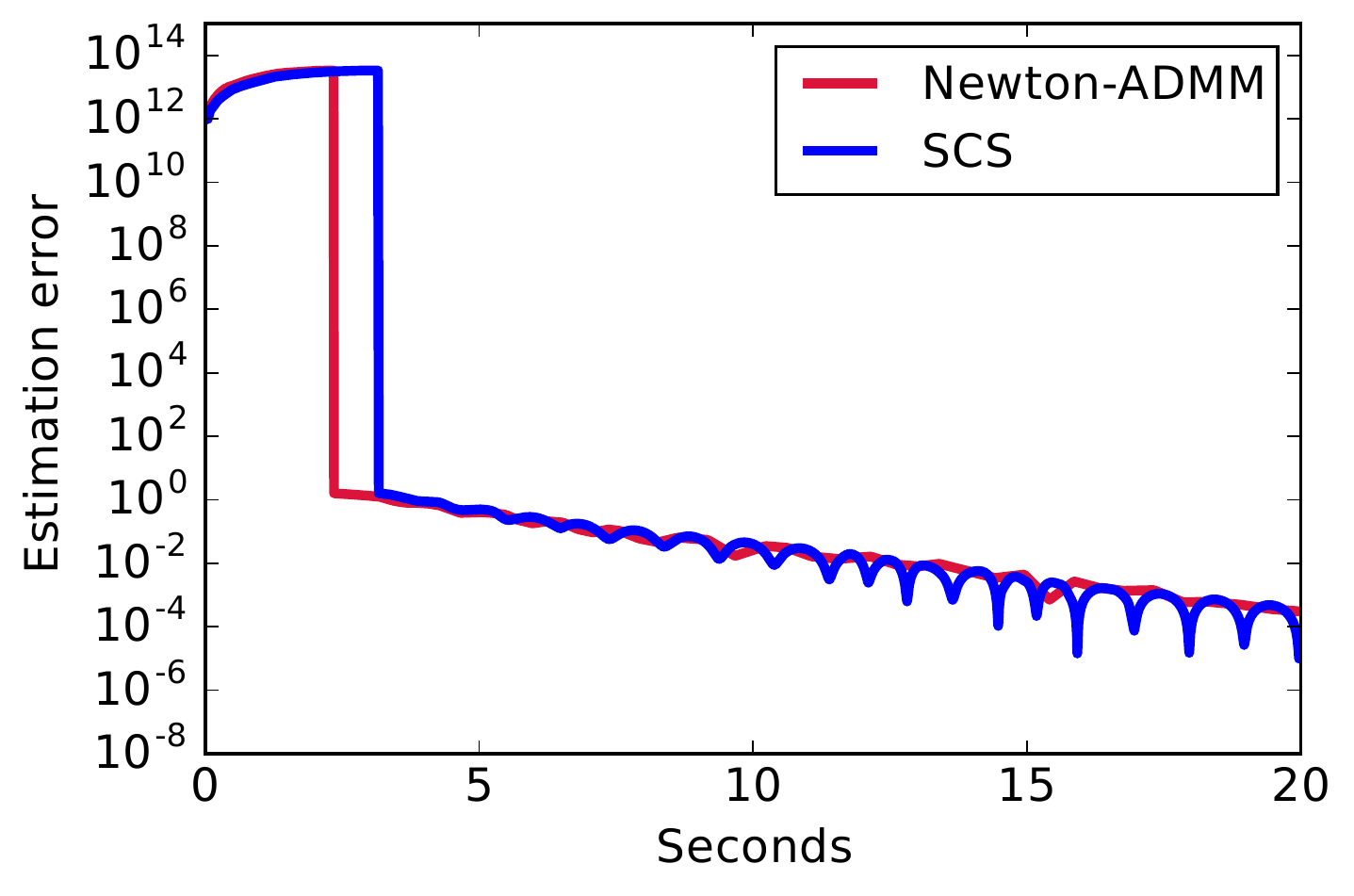}
\caption{Comparison of Newton-ADMM and SCS \citep{o2016conic}, on several convex problems.  Columns, from left to right: linear programming, portfolio optimization, $\ell_1$-penalized logistic regression, robust PCA.  Top row: wallclock time vs.~log-distance to the optimal objective value, obtained by running an interior point method.  Bottom row: wallclock time vs.~log-distance, in a Euclidean norm sense, to the solution.  Each plot is one representative run out of 20 (the variance was negligible).  Best viewed in color.}
\label{fig:panel}
\end{center}
\vskip -0.2in
\end{figure*} 

\subsection{Extension as a specialized solver}
Finally, we observe that the basic idea of treating the residuals of consecutive ADMM iterates as a fixed point iteration, and then finding a fixed point using a Newton method, is completely general, \ie, the same idea can be used to accelerate (virtually) any ADMM-based algorithm, for a convex problem.  To illustrate, consider the lasso problem,
\begin{equation}
\begin{array}{ll}
\minimizewrt{\theta \in \mathbf{R}^p} & (1/2) \| y - X \theta \|_2^2 + \lambda \| \theta \|_1,
\end{array}
\label{eq:lasso}
\end{equation}
where $y \in \reals^N, \; X \in \reals^{N \times p}, \; \lambda \geq 0$; the ADMM recurrences \citep[Section 6.4]{parikh2014proximal} are
\begin{align}
\theta & \leftarrow (X^TX + \rho I)^{-1}(X^T y + \rho(\kappa - \mu)) \label{eq:4} \\
\kappa & \leftarrow S_{\lambda / \rho} (\theta + \mu) \label{eq:5} \\
\mu & \leftarrow \mu + \theta - \kappa, \label{eq:6}
\end{align}
where $\rho > 0, \; \kappa, \mu \in \reals^p$ are the tuning parameter and auxiliary variables, introduced by ADMM, respectively, and $S_{\lambda / \rho}(\cdot)$ is the soft-thresholding operator.  The map $F : \reals^{3p} \to \reals^{3p}$, from \eqref{eq:F}, with components set to the residuals of the ADMM iterates given in \eqref{eq:4} -- \eqref{eq:6}, is then
\begin{align*}
F(z) = 
\left[
\begin{array}{c}
(X^TX + \rho I) \theta - (X^T y + \rho(\kappa - \mu)) \\
\kappa - S_{\lambda / \rho} (\theta + \mu) \\
\kappa - \theta
\end{array}
\right],
\end{align*}
where $z = (\theta, \kappa, \mu) \in \reals^{3p}$, and we also changed coordinates, similar to before.  An element $J \in \reals^{3p \times 3p}$ of the generalized Jacobian of $F$ is then
\begin{equation*}
J = 
\left[
\begin{array}{ccc}
X^T X + \rho I & -\rho I & \rho I \\
-D & I & D \\
-I & I & 0
\end{array}
\right],
\end{equation*}
where $D \in \reals^{p \times p}$ is diagonal with $D_{ii}$ set to 1 if $|\theta_i + \mu_i| > \lambda / \rho$ and 0 otherwise, for $i=1,\ldots,m$.

In the left panel of Figure \ref{fig:lasso}, we compare a specialized Newton-ADMM applied \textit{directly} to the lasso problem \eqref{eq:lasso}, with the ADMM algorithm for \eqref{eq:4} -- \eqref{eq:6}, a proximal gradient method \citep{beck2009fast}, and a heavily-optimized implementation of coordinate descent \citep{friedman2007pathwise}; we set $p = 400, \; N = 200, \; \lambda = 10, \; \rho = 1$.  Here, the specialized Newton-ADMM is quite competitive with these strong baselines; the specialized Newton-ADMM outperforms Newton-ADMM applied to the cone program \eqref{eq:coneprog2}, so we omit the latter from the comparison.  \citet{stella2016forward} recently described a related approach.

In the right panel of Figure \ref{fig:lasso}, we present a similar comparison, for sparse inverse covariance estimation, with the QUIC method of \citet{hsieh2014quic}; Newton-ADMM clearly performs best ($p = N = 1,000, \; \lambda = \rho = 1$, details in the supplement).  

\begin{figure}[ht]
\begin{center}
\includegraphics[width=0.49\columnwidth]{./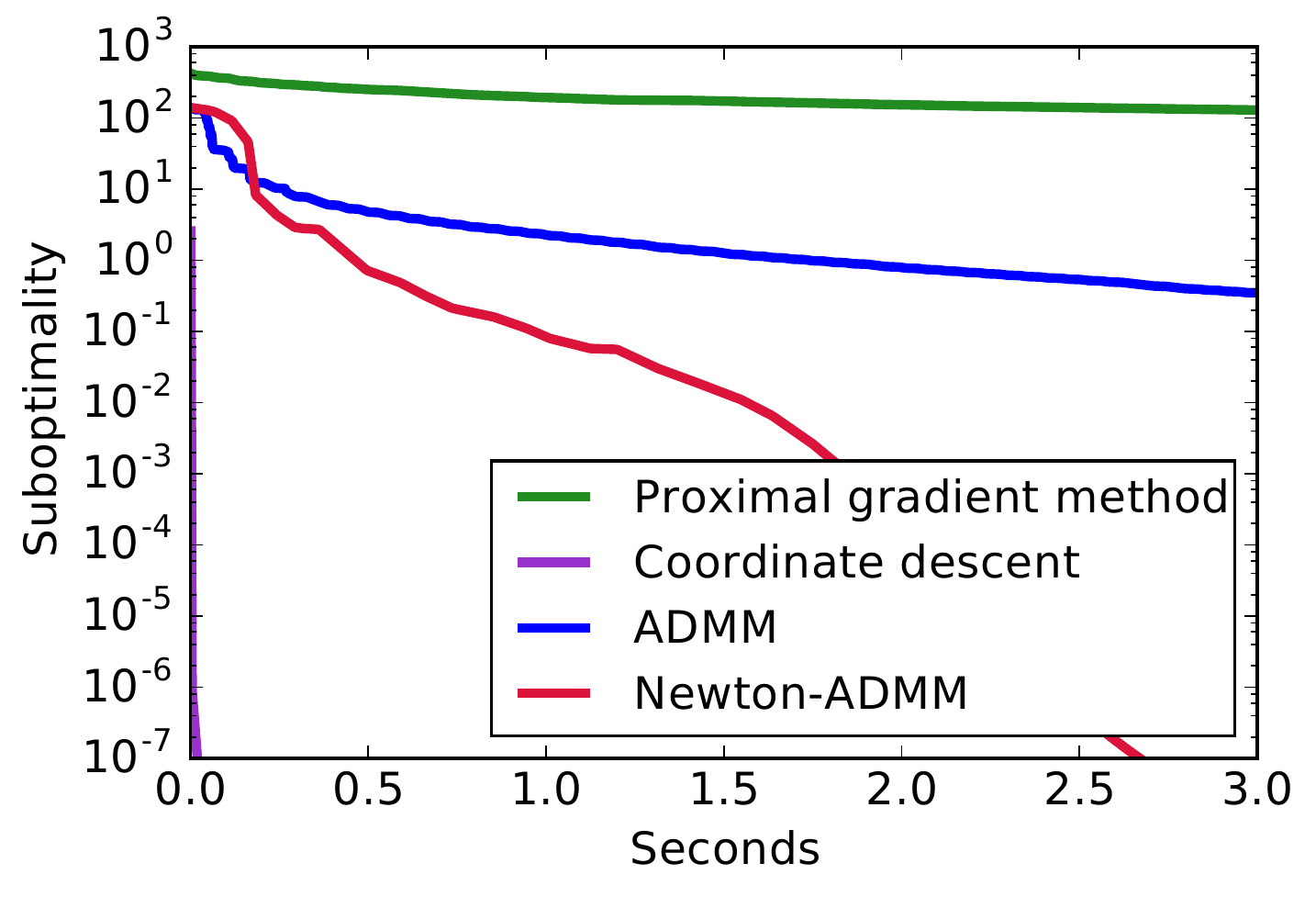}
\includegraphics[width=0.49\columnwidth]{./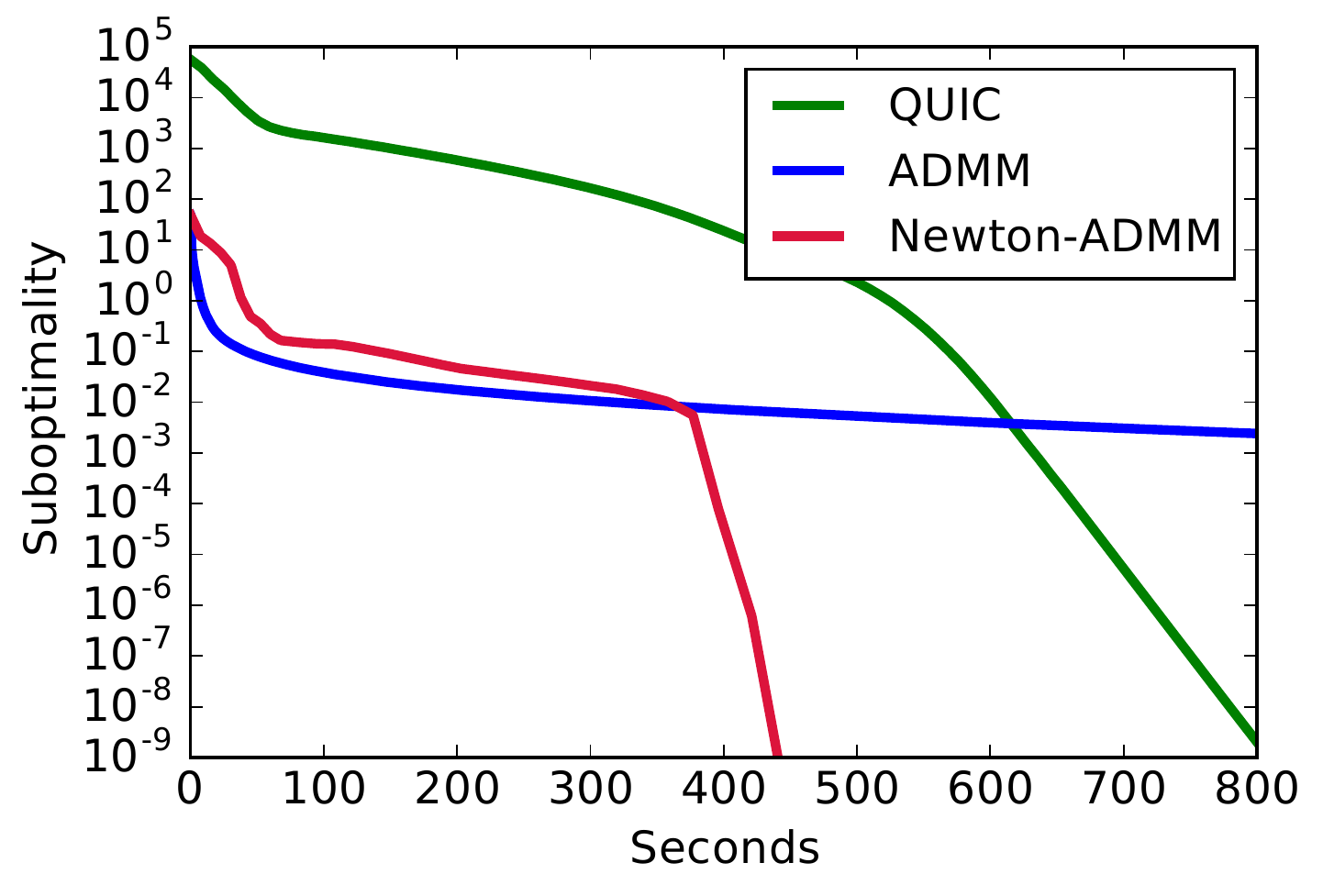}
\caption{Left: wallclock time vs.~log-distance to the optimal objective value, on the lasso problem, for the specialized Newton-ADMM method, standard ADMM, a proximal gradient method, and a heavily-optimized coordinate descent implementation (as a reference benchmark).  Right: for a sparse inverse covariance estimation problem, with specialized Newton-ADMM, standard ADMM, and QUIC \citep{hsieh2014quic}.  Best viewed in color.}
\label{fig:lasso}
\end{center}
\vskip -0.2in
\end{figure} 


\section{Discussion}
\label{sec:disc}
We introduced Newton-ADMM, a new method for generic convex programming.  The basic idea is use a nonsmooth Newton method to find a fixed point of the residuals of the consecutive ADMM iterates generated by SCS, a state-of-the-art solver for cone programs; we showed that the basic idea is fairly general, and can be applied to accelerate (virtually) any ADMM-based algorithm.  We presented theoretical and empirical support that Newton-ADMM converges rapidly (\ie, quadratically) to a solution, outperforming SCS across several problems.

\paragraph{Acknowledgements.}
AA was supported by the DoE Computational
Science Graduate Fellowship DE-FG02-97ER25308.  EW was supported by DARPA, under
award number FA8750-17-2-0027.  We thank Po-Wei Wang and the referees for a
careful proof-reading. 
%
%

\clearpage            
\bibliography{refs}
\bibliographystyle{plainnat}

\newpage
\allowdisplaybreaks

\renewcommand\thefigure{S.\arabic{figure}}    
\renewcommand\thesection{S.\arabic{section}}
\renewcommand\theequation{S.\arabic{equation}}     
\setcounter{figure}{0}
\setcounter{section}{0}
\setcounter{equation}{0}

\title{Supplement to ``A Semismooth Newton Method for Fast, Generic Convex Programming''}
\maketitle


\setcounter{footnote}{1}
\renewcommand*{\thefootnote}{\fnsymbol{footnote}}
\footnotetext{These authors contributed equally.}

\section{Proof of Lemma \ref{lem:projexpsemi}}
The proof relies on the proof of Lemma \ref{lem:jacprojexp}, below.  Let $z, \delta \in \reals^3$, and let $\delta \to 0$.  Suppose $z+\delta$ converges to a point that falls into one of the first three cases given in Section \ref{sec:bkgd}.  Then, from the statement and proof of Lemma \ref{lem:jacprojexp}, an element $J_{P_{\mathcal{K}_{\textrm{exp}}^*}}(z + \delta)$ of the generalized Jacobian of the projection onto the dual of the exponential cone at $z + \delta$, is just a matrix with fixed entries, since projections onto convex sets are continuous.  If $z + \delta$ converges to a point that falls into the fourth case, then brute force, \eg, using symbolic manipulation software, reveals that an element of the generalized Jacobian (\ie, the inverse of the specific 4x4 matrix $D$ given in \eqref{eq:24}, below) is also a constant matrix, even as $z_1^\star, z_2^\star, \nu^\star \to 0$; for completeness, we give $D^{-1}$ in \eqref{eq:25}, at the end of the supplement.  Thus in all the cases, the Jacobian is a constant matrix, which is enough to establish that the limit in \eqref{eq:semismooth} exists.  \hfill\qed

\section{Proof of Lemma \ref{lem:Fsemi}}
First, we give a useful result; its proof is elementary.
\begin{lemma} \label{lem:affsemi}
The affine transformation, $A F + b$, of a (strongly) semismooth map $F : \reals^k \to \reals^k$, with $A \in \reals^{k \times k}, \; b \in \reals^k$, is (strongly) semismooth.
\end{lemma}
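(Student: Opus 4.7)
The plan is to proceed directly from the alternative characterization of (strong) semismoothness given in the background: namely, that it suffices to verify local Lipschitzness together with the residual bound $\|F(z+\delta)-F(z)-J\delta\|_2 = o(\|\delta\|_2)$ (resp.\ $O(\|\delta\|_2^2)$) for $J\in\mathcal{J}(z+\delta)$. Writing $G(z) = AF(z) + b$, I would first check that $G$ inherits local Lipschitzness of $F$ with constant scaled by $\|A\|_2$, which is immediate from $\|G(z_1)-G(z_2)\|_2 \le \|A\|_2\|F(z_1)-F(z_2)\|_2$.

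Next I would identify the generalized Jacobian of $G$. Wherever $F$ is (classically) differentiable at some $z_i\in\mathcal{D}$, so is $G$, with ordinary Jacobian $A\,J_F(z_i)$. Plugging this into the definition \eqref{eq:genjacob} and using the fact that left-multiplication by the fixed linear operator $A$ commutes with both the limit and the convex hull, I get $\mathcal{J}_G(z) = A\,\mathcal{J}_F(z) := \{A J : J\in \mathcal{J}_F(z)\}$. This is the only place one must be a little careful, but it is routine since $A$ is a fixed (hence continuous, linear) operator.

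With this in hand, the residual bound is a one-line computation: for any $J\in\mathcal{J}_F(z+\delta)$, so that $AJ\in\mathcal{J}_G(z+\delta)$,
\[
\|G(z+\delta)-G(z)-(AJ)\delta\|_2 = \|A\bigl(F(z+\delta)-F(z)-J\delta\bigr)\|_2 \le \|A\|_2\,\|F(z+\delta)-F(z)-J\delta\|_2.
\]
If $F$ is semismooth, the right-hand side is $o(\|\delta\|_2)$; if $F$ is strongly semismooth, it is $O(\|\delta\|_2^2)$. Combined with the local Lipschitz property, this gives (strong) semismoothness of $G$ by the characterization quoted from \citet[Theorem 2.3]{qi1993nonsmooth}. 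For the plain semismooth case, I would also note that $G$ is directionally differentiable in every direction because $F$ is, with $G'(z;h) = AF'(z;h)$.

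The only step that is not entirely mechanical is the identification $\mathcal{J}_G(z) = A\,\mathcal{J}_F(z)$; I expect this to be the main obstacle, though a minor one, since it requires commuting $A$ past the limit and the convex hull in \eqref{eq:genjacob}. Everything else is linearity of $A$ and application of the definition.
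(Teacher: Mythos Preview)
Your argument is correct, but it proceeds differently from the paper. The paper's proof is a two-line reduction: it cites \citet[Corollary 2.4]{qi1993nonsmooth} to say that a vector-valued map is (strongly) semismooth if and only if each scalar component is, and then cites \citet[Proposition 1.75]{izmailov2014newton} to say that (strongly) semismooth scalar functions are closed under linear combinations. Since each component of $AF+b$ is an affine combination of the components of $F$, the result follows immediately from these two citations.

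Your route is instead a direct verification from the characterization of (strong) semismoothness: you check local Lipschitzness, directional differentiability, and the $o(\|\delta\|_2)$ / $O(\|\delta\|_2^2)$ residual bound by hand, after identifying $\mathcal{J}_G(z)=A\,\mathcal{J}_F(z)$. This is more self-contained and makes the mechanism transparent, at the cost of having to justify the generalized-Jacobian identity (which, as you note, is the only nontrivial step; it follows from Clarke's result that the definition \eqref{eq:genjacob} is unchanged if one restricts to any full-measure subset of the differentiability set, so one may take sequences in $\mathcal{D}_F\subseteq\mathcal{D}_G$). The paper's approach is shorter because it outsources exactly this work to the two cited propositions; yours avoids the external dependencies but reproves a special case of what those propositions already give.
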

\begin{proof}
First of all, we have that a map $F : \reals^k \to \reals^k$ is (strongly) semismooth if and only if its components $F_i$, for $i=1,\ldots,k$, are (strongly) semismooth \citep[Corollary 2.4]{qi1993nonsmooth}.  Additionally, we have that (strongly) semismooth maps are closed under linear combinations \citep[Proposition 1.75]{izmailov2014newton}.  Putting the two pieces together gives the claim.
\end{proof}

Now, from Lemma \ref{lem:projsemi}, we have that the projections onto the nonnegative orthant, second-order cone, positive semidefinite cone, as well as the free cone (an affine map, hence strongly semismooth \citep[Proposition 7.4.7]{facchinei2007finite}), are all strongly semismooth.  The map $F$, defined in \eqref{eq:F}, is just an affine transformation of these projections; thus, by \eqref{lem:affsemi}, it is strongly semismooth.

When $\mathcal{K}$, from \eqref{eq:coneprog2}, is the exponential cone, the analogous claim that the map $F$ is semismooth follows, from Lemma \ref{lem:projexpsemi}, in a similar way.  \hfill\qed

\section{Proof of Lemma \ref{lem:subg}}
\begin{proof}
We have that (i) the projection onto a convex set (\eg, the nonnegative orthant, second-order cone, positive semidefinite cone, exponential cone, and free cone), naturally, yields a convex set; (ii) the affine image of a convex set is a convex set; and (iii) retaining only some of the coordinates of a convex set is a convex set \citep[page 38]{boyd2004convex}.  Hence, the components $F_i$, for $i=1,\ldots,3k$, of the map $F : \reals^{3k} \to \reals^{3k}$, defined in \eqref{eq:F}, are convex functions.  Thus, by \citet[Proposition 1.2]{clarke1990optimization}, the $i$th row of any element of the generalized Jacobian is just a subgradient of $F_i$.  Now observe that the element $J$ of the generalized Jacobian, given in \eqref{eq:J}, is given by finding subgradients of the $F_i$.
\end{proof}


\section{Jacobian of the projection onto the second-order cone}
In Section \ref{sec:jacobs}, we stated that, in one case, the Jacobian of the projection onto the second-order cone at some point $z = (z_1, z_2) \in \reals^m$, with $z_1 \in \reals^{m-1}, \; z_2 \in \reals$, is a low-rank matrix $D \in \reals^{m \times m}$; the matrix $D$ is given by
\begin{equation}
D = 
\left[
\begin{array}{cccc}
\frac{1}{2} + \frac{z_2}{2 \| z_1 \|_2} - \frac{z_2}{2} \frac{(z_1)_1^2}{ \| z_1 \|_2^3 } & - \frac{z_2}{2} \frac{(z_1)_1 (z_1)_2}{ \| z_1 \|_2^3 } & \cdots & \frac{1}{2} \frac{(z_1)_1}{ \| z_1 \|_2 } \\
- \frac{z_2}{2} \frac{(z_1)_1 (z_1)_2}{ \| z_1 \|_2^3 } & \frac{1}{2} + \frac{z_2}{2 \| z_1 \|_2} - \frac{z_2}{2} \frac{(z_1)_2^2}{ \| z_1 \|_2^3 } & \cdots & \frac{1}{2} \frac{(z_1)_2}{ \| z_1 \|_2 } \\
\vdots & \vdots & \ddots & \vdots \\
\frac{1}{2} \frac{(z_1)_1}{ \| z_1 \|_2 } & \frac{1}{2} \frac{(z_1)_2}{ \| z_1 \|_2 } & \cdots & \frac{1}{2}
\end{array}
\right],
\end{equation}
which can be seen as the sum of diagonal and low-rank matrices.  Here, $(z_1)_i$ denotes the $i$th component of $z_1$.

\section{Proof of Lemma \ref{lem:jacprojpsd}}
Rewrite the projection onto the positive semidefinite cone as \eqref{eq:projpsd} as $P_{\mathcal{K}_{\textrm{psd}}}(Z) = Q \max(\Lambda, 0) Q^T$, where $Z = Q \max(\Lambda, 0) Q^T$ is the eigenvalue decomposition of some real, symmetric matrix $Z$, and the $\max$ here is interpreted diagonally.  Then, using the chain rule \citep{magnus1995matrix}, we get that
\begin{align*}
J_{P_{\mathcal{K}_{\textrm{psd}}}}(\vect Z) (d \vect Z) & = d \vect P_{\mathcal{K}_{\textrm{psd}}}(Z) \\
& = \vect \left( (d Q) \max(\Lambda, 0) Q^T + Q (d \max(\Lambda, 0)) Q^T + Q \max(\Lambda, 0) (d Q)^T \right);
\end{align*}
so, what remains is computing (each column of) $d Q$ and $d \max(\Lambda, 0)$, \ie, the differential of (each column of) the matrix of eigenvectors, and the differential of $\max(\Lambda, 0)$, respectively.  From \citet[Chapter 8]{magnus1995matrix}, we get that
\[
d Q_i = (\Lambda_{ii} I - Z)^+ (d Z) Q_i,
\]
where $Z^+$ denotes the pseudo-inverse of the matrix $Z$, and that
\[
\left[ d \max( \Lambda, 0 ) \right]_{ii} = I_+(\Lambda_{ii}) Q_i^T (d Z) Q_i,
\]
by applying the chain rule; here, $I_+(\cdot)$ is the indicator function of the nonnegative orthant, \ie, it equals 1 if its argument is nonnegative and 0 otherwise.  Replacing $d Z$ with some real, symmetric matrix $\tilde Z$ yields the claim.  \hfill\qed

\section{Further details on the per-iteration costs of SCS, Newton-ADMM, and CVXOPT}
Here, we elaborate on the costs of a single iteration of SCS, Newton-ADMM, and CVXOPT.  For simplicity, we consider the case where the cone $\mathcal{K}$, in the cone program \eqref{eq:coneprog}, is just a single cone (handling the case where $\mathcal{K}$ is the direct product of multiple cones is not hard); also, we are mostly interested in the high-dimensional case, where $n > m$.

During a single iteration of SCS, described in \eqref{eq:1} -- \eqref{eq:3}, we must carry out the computations outlined below:

\begin{itemize}
\item We must update the $\tilde u$ variable, which costs $O(\max\{n^2,m^2\})$ (see Section 4.1 of \citet{o2016conic}).

\item We must update the $u$ variable, the cost of which is dominated by the cost of projecting an $m$-vector onto the dual cone $\mathcal{K}^*$; for the case of projecting onto the positive semidefinite cone, we equivalently consider a matrix with dimensions $\sqrt{m} \times \sqrt{m}$.  These costs are as follows:

\begin{itemize}
\item For the nonnegative orthant, $\mathcal{K}_{\textrm{no}}$, the cost is $O(m)$.
\item For the second-order cone, $\mathcal{K}_{\textrm{soc}}$, the cost is $O(m)$.
\item For the positive semidefinite cone, $\mathcal{K}_{\textrm{psd}}$, the cost is $O(m^{3/2})$.
\item For the exponential cone, $\mathcal{K}_{\textrm{exp}}$, the cost is roughly $O(m^3)$.
\end{itemize}

\item We must update the $v$ variable, which has negligible cost.
\end{itemize}

Summing up, the cost of a single iteration of SCS is $O(\max\{n^2,m^2\})$ plus the cost of projecting onto the dual cone $\mathcal{K}^*$, as claimed in the main paper.

For Newton-ADMM, we must compute the ingredients on both sides of \eqref{eq:newtsys}, $F$ and $J$, as well as run GMRES and the backtracking line search.  Computing both $F$ and $J$ can be seen as essentially costing the same as a single iteration of SCS, \ie, the cost of projecting onto the dual cone $\mathcal{K}^*$ plus $O(\max\{n^2,m^2\})$; the backtracking line search, then, costs the number of backtracking iterations times the aforementioned cost.  Furthermore, running GMRES costs $O(\max\{n^2,m^2\})$, assuming it returns early.  Hence the cost of a single iteration of Newton-ADMM is (as claimed in the main paper) the number of backtracking iterations times the sum of two costs: the cost of projecting onto the dual cone $\mathcal{K}^*$ plus $O(\max\{n^2,m^2\})$.

Finally, turning to the interior-point method CVXOPT, it can be seen that the per-iteration cost here is dominated by solving the Newton system (1.11) in \citet{andersen2011interior}, essentially costing $O(n^3)$.

We mention that the above per-iteration costs can, of course, be improved by taking advantage of sparsity.

\section{Proof of Lemma \ref{lem:jacprojexp}}
First, from the Moreau decomposition given in \eqref{eq:moreau}, we get that
\[
J_{P_{\mathcal{K}_{\textrm{exp}}^*}}(z) = I - J_{P_{\mathcal{K}_{\textrm{exp}}}}(-z);
\]
so, what remains is to compute $J_{P_{\mathcal{K}_{\textrm{exp}}}}(z)$, for some $z \in \reals^m$.  Looking back at the first three cases given in Section \ref{sec:bkgd}, we get that
\begin{align*}
J_{P_{\mathcal{K}_{\textrm{exp}}}}(z) =
\begin{cases}
I, & z \in \mathcal{K}_{\textrm{exp}} \\
-I, & z \in \mathcal{K}_{\textrm{exp}}^* \\
\diag(1, I_+(z_2), I_+(z_3)), & z_1, z_2 < 0,
\end{cases}
\end{align*}
where $I_+(z_i), \; i=2,3$, is the indicator function of the nonnegative orthant, \ie, it equals 1 if $z_i \geq  0$ and 0 otherwise.  For the fourth case, the projection $P_{\mathcal{K}_{\textrm{exp}}}(z)$ is the solution to the optimization problem given in \eqref{eq:projexp}.  Now observe that (i) the optimization problem \eqref{eq:projexp} is, in fact, convex, since the constraint $\tilde z_2 > 0$ is really just implied by the domain of the function $\exp(\tilde z_1 / \tilde z_2)$; (ii) the optimization problem \eqref{eq:projexp} is feasible, since $z_1^\star = 1, \, z_2^\star = 1, \, z_3^\star = \exp(1)$ satisfies the constraint; and (iii) we can obtain a solution to the optimization problem \eqref{eq:projexp}, by using a Newton method \citep[Section 6.3.4]{parikh2014proximal}.

The rest of the proof relies on the KKT conditions for the optimization problem \eqref{eq:projexp}, as well as differentials (see, \eg, \citet{magnus1995matrix}).  The Lagrangian of the optimization problem \eqref{eq:projexp} is given by
\[
(1/2) \| \tilde z - z \|_2^2 + \nu ( \tilde z_2 \exp(\tilde z_1 / \tilde z_2) - \tilde z_3 ),
\]
where $\nu \in \reals$ is the dual variable.  Thus, we get that the KKT conditions for the optimization problem \eqref{eq:projexp}, at a solution $\gamma^\star = (z_1^\star, z_2^\star, z_3^\star, \nu^\star)$, are
\begin{align}
z_1^\star - z_1 + \nu^\star \exp(z_1^\star / z_2^\star) & = 0 \label{eq:20} \\
z_2^\star - z_2 + \nu^\star ( \exp(z_1^\star / z_2^\star) - (z_1^\star / z_2^\star) \exp(z_1^\star / z_2^\star) ) & = 0 \label{eq:21} \\
z_3^\star - z_3 - \nu^\star & = 0 \label{eq:22} \\
z_2^\star \exp(z_1^\star / z_2^\star) - z_3^\star & = 0. \label{eq:23}
\end{align}

Now consider the differentials $d z_1^\star, \, d z_2^\star, \, d z_3^\star, \, d z_4^\star$ and $d z_1, \, d z_2, \, d z_3, \, d z_4$ of the KKT conditions \eqref{eq:20} -- \eqref{eq:23}; we get for the condition \eqref{eq:20} that
\begin{align*}
d z_1^\star - d z_1 + (d \nu^\star) \exp(z_1^\star / z_2^\star) + \nu^\star (d \exp(z_1^\star / z_2^\star)) & = 0 \\
\iff d z_1^\star - d z_1 + (d \nu^\star) \exp(z_1^\star / z_2^\star) + \nu^\star \exp(z_1^\star / z_2^\star) (d(z_1^\star / z_2^\star)) & = 0 \\
\iff d z_1^\star - d z_1 + (d \nu^\star) \exp(z_1^\star / z_2^\star) + \nu^\star \exp(z_1^\star / z_2^\star) \left( \frac{d z_1^\star}{z_2^\star} - \frac{z_1^\star (d z_2^\star) }{(z_2^\star)^2} \right) & = 0 \\
\iff 
\left[
\begin{array}{cccc}
1 + \frac{\nu^\star \exp(z_1^\star / z_2^\star)}{z_2^\star}
& - \frac{\nu^\star \exp(z_1^\star / z_2^\star) z_1^\star}{(z_2^\star)^2}
& 0
& \exp(z_1^\star / z_2^\star)
\end{array}
\right]
\left[
\begin{array}{c}
d z_1^\star \\
d z_2^\star \\
d z_3^\star \\
d \nu^\star
\end{array}
\right]
& = 
d z_1.
\end{align*}

Repeating the above for the other conditions \eqref{eq:21} -- \eqref{eq:23}, we get that
\begin{align}
\underbrace{\left[
\begin{array}{cccc}
1 + \frac{\nu^\star \exp(z_1^\star / z_2^\star)}{z_2^\star} & - \frac{\nu^\star \exp(z_1^\star / z_2^\star) z_1^\star}{(z_2^\star)^2} & 0 & \exp(z_1^\star / z_2^\star) \\
- \frac{\nu^\star \exp(z_1^\star / z_2^\star) z_1^\star}{(z_2^\star)^2} & 1 + \frac{\nu^\star \exp(z_1^\star / z_2^\star) (z_1^\star)^2}{(z_2^\star)^3} & 0 & (1 - z_1^\star / z_2^\star) \exp(z_1^\star / z_2^\star) \\
0 & 0 & 1 & - 1 \\
\exp(z_1^\star / z_2^\star) & (1 - z_1^\star / z_2^\star) \exp(z_1^\star / z_2^\star) & -1 & 0
\end{array}
\right]}_{D}
\underbrace{\left[
\begin{array}{c}
d z_1^\star \\
d z_2^\star \\
d z_3^\star \\
d \nu^\star
\end{array}
\right]}_{d \gamma^\star}
= 
\underbrace{\left[
\begin{array}{c}
d z_1 \\
d z_2 \\
d z_3 \\
d \nu
\end{array}
\right]}_{d \gamma}, \label{eq:24}
\end{align}
\ie,
\[
D (d \gamma^\star) = d \gamma \quad \iff \quad d \gamma^\star = D^{-1} (d \gamma);
\]
here, $D$ is nonsingular, since the optimization problem \eqref{eq:projexp} is feasible.  So, by definition, the upper left 3x3 submatrix of $D^{-1}$ is the Jacobian of the projection onto the exponential cone, for the fourth case.  \hfill\qed

\section{Intuition behind some of the regularity conditions for Theorem \ref{thm:global1}, Theorem \ref{thm:global2}, and Theorem \ref{thm:local}}
Here, we elaborate on a couple of the regularity assumptions stated in the main paper.

\subsection{Regularity condition (A4)}
Roughly speaking, the condition (A4) can be seen as requiring that the directional derivative of $\tilde z \mapsto \| F(\tilde z) \|_2^2$ be bounded by $\alpha^{1/2} \| F(\tilde z) \|_2^2$.

We list some (useful) functions satisfying (A4):
\begin{itemize}
\item The function $F(z) = z^2$, for $z \in \reals$.  To show that the function $F$ satisfies (A4), we proceed by computing the required ingredients on both sides of (A4). Here, and for the rest of the section, we write $D_\Delta F^2(z)$ to mean the directional derivative of the function $F$ squared, in the direction $\Delta$, evaluated at $z$.

We compute, for $z > 0$ and the Newton direction $\Delta = -1$, the left-hand side of (A4),
\[
D_\Delta F^2(z) = -4z^3,
\]

and the right-hand side of (A4),
\[
-\alpha^{1/2} 2z^3.
\]

So, satisfying (A4) means
\[
-4z^3 \leq -\alpha^{1/2} 2z^3 \iff 2 \geq \alpha^{1/2},
\]
which is certainly true.  Repeating the argument for $z < 0$ and $\Delta = 1$ yields a similar result. (When $z = 0$, it is a solution.) Hence, $F(z) = z^2$ satisfies (A4).

\item The function $F(z) = \max(z+1, cz+1)$, with $z \in \reals$ and some $c > 0$.

We have, for the left-hand side of (A4):
\[
D_\Delta F^2(z = 0) = -2c.
\]

We have, for the right-hand side of (A4):
\[
\hat F(z = 0, \Delta = -1) = J(z = 0) \Delta = 1 \cdot (-1) = -1.
\]

So, satisfying (A4) means
\[
-2c \leq -\alpha^{1/2} \iff c \geq \alpha^{1/2}/2.
\]

In words, functions that satisfy (A4) cannot have $c$ too small.

\item An argument similar the one used above for $F(z) = z^2$ can be used to show that the function $F(z) = |z|$ also satisfies (A4).
\end{itemize}

We also establish, by using the condition (A4), that the backtracking line search, used in Algorithm \ref{alg:ours}, terminates.  Suppose, for contradiction, that the backtracking line search never terminates. Then, from the backtracking line search iteration described in Algorithm \ref{alg:ours}, we have, for all backtracking iterations $k$,
\[
( \|F(z) + \gamma^{(k)} \Delta\|_2^2 - \|F(z)\|_2^2 ) / \gamma^{(k)} \geq -\alpha \|F(z)\|_2^2.
\]

Taking the limit as $k \to \infty$, we get
\begin{equation}
D_\Delta \|F(z)\|_2^2 \geq -\alpha \|F(z)\|_2^2. \label{eq:27}
\end{equation}

On the other hand, expanding the right-hand side of (A4) gives
\begin{align}
\alpha^{1/2} F(z)^T \hat F(z, \Delta) & = \alpha^{1/2} \left( F(z)^T ( \hat F(z, \Delta) + F(z) ) - F(z)^T F(z) \right) \\
& \leq \alpha^{1/2} \left( \|F(z)\|_2 \|\hat F(z, \Delta) + F(z)\|_2 - \|F(z)\|_2^2 \right) \\
& \leq \alpha^{1/2} \left( \|F(z)\|_2 \varepsilon \|F(z)\|_2 - \|F(z)\|_2^2 \right) \\
& \leq \alpha^{1/2} \left( (1-\alpha^{1/2})\|F(z)\|_2^2 - \|F(z)\|_2^2 \right) \\
& = -\alpha^{1/4} \|F(z)\|_2^2. \label{eq:28}
\end{align}

Putting (A4) and \eqref{eq:28} above together immediately gives
\begin{equation}
D_\Delta \|F(z)\|_2^2 \leq -\alpha^{1/4} \|F(z)\|_2^2. \label{eq:29}
\end{equation}

But putting \eqref{eq:27} and \eqref{eq:29} together gives
\[
-\alpha \|F(z)\|_2^2 \leq D_\Delta \|F(z)\|_2^2 \leq -\alpha^{1/4} \|F(z)\|_2^2,
\]
a contradiction, since $\alpha \in (0,1)$.

\subsection{Regularity condition (A5)}
Roughly speaking, the condition (A5) says that the Newton step on each iteration cannot be too large.

\section{Proof of Theorem \ref{thm:global1}}
\begin{proof}
We begin by recalling the condition under which backtracking line search continues, for a particular iteration of Newton's method; this happens as long as (see Algorithm \ref{alg:ours})
\begin{equation}
\| F(z^{(i)} + t^{(i)} \Delta^{(i)}) \|_2^2 \geq (1 - \alpha t^{(i)}) \| F(z^{(i)}) \|_2^2. \label{eq:btls}
\end{equation}
This means that when backtracking line search terminates, we get that
\begin{equation}
0 \leq \| F(z^{(i+1)}) \|_2^2 < (1 - \alpha t^{(i)}) \| F(z^{(i)}) \|_2^2 < \| F(z^{(i)}) \|_2^2. \label{eq:btls2}
\end{equation}
(To be clear, in order to get the second inequality here, we used the fact that backtracking line search terminates after \eqref{eq:btls} in Algorithm \ref{alg:ours} no longer holds.)  In order to get the third inequality here, we used the simple fact that $0 < 1 - \alpha t^{(i)} \leq 1$, since $0 < \alpha < 1/2$ and $0 < t^{(i)} \leq 1$.  So, we have shown that the sequence $(\| F(z^{(i)}) \|_2^2)_{i=1}^\infty$ is both bounded below and decreasing.  Note that this is just a sequence in $\reals$, and thus, by the monotone convergence theorem, it converges.  Furthermore, since every convergent sequence in $\reals$ is Cauchy, we get that
\begin{equation}
\lim_{i \to \infty} \left( \| F(z^{(i)}) \|_2^2 - \| F(z^{(i+1)}) \|_2^2 \right) = 0. \label{eq:9}
\end{equation}
On the other hand, by rearranging the second inequality in \eqref{eq:btls2}, we get that
\begin{equation}
\| F(z^{(i)}) \|_2^2 - \| F(z^{(i+1)}) \|_2^2 > \alpha t^{(i)} \| F(z^{(i)}) \|_2^2 \geq 0. \label{eq:13}
\end{equation}
So, \eqref{eq:9} along with taking the $\limsup_{i \to \infty}$ on both sides of \eqref{eq:13} yields that $\lim_{i \to \infty} \alpha t^{(i)} \| F(z^{(i)}) \|_2^2 = 0$.  But assumption (A1) says that $\limsup_{i \to \infty} t^{(i)} \to t > 0$, and since $\alpha > 0$, we get that $\lim_{i \to \infty} \tilde t \| F(z^{(i)}) \|_2^2 = 0$, for some $\tilde t > 0$, and so $\lim_{i \to \infty} \| F(z^{(i)}) \|_2^2 = 0$, which implies that $\lim_{i \to \infty} F(z^{(i)}) = 0$, as claimed.
\end{proof}

\section{Proof of Theorem \ref{thm:global2}}
\begin{proof}
First of all, by the assumption that $(z^{(i)})_{i=1}^\infty$ is convergent and assumption (A5), we must have that
\begin{equation}
0 \leq \| \Delta^{(i)} \|_2 \leq \frac{1}{C_2} \| \hat F(z^{(i)}, \Delta^{(i)}) \|_2 \leq \frac{\varepsilon+1}{C_2} \| F(z^{(i)}) \|_2, \label{eq:15}
\end{equation}
where the second inequality here follows by rearranging (A5), and the third inequality follows from \eqref{eq:trunc}, as well as the triangle inequality: after computing $\Delta^{(i)}$ on Newton iteration $i$, we are assured that
\begin{align*}
\| F(z^{(i)}) + \hat F(z^{(i)}, \Delta^{(i)}) \|_2 & \leq \varepsilon \| F(z^{(i)}) \|_2 \\
\implies \| \hat F(z^{(i)}, \Delta^{(i)}) \|_2 - \| F(z^{(i)}) \|_2 & \leq \varepsilon \| F(z^{(i)}) \|_2 \\
\iff \| \hat F(z^{(i)}, \Delta^{(i)}) \|_2 & \leq (\varepsilon + 1) \| F(z^{(i)}) \|_2.
\end{align*}
Hence, since
\[
\sup_{j, \ell} \dist(\Delta^{(j)}, \Delta^{(\ell)}) \quad \leq \quad \sup_{j} \| \Delta^{(j)} \|_2 + \sup_{\ell} \| \Delta^{(\ell)} \|_2,
\]
and because the right-hand side here is bounded (as per \eqref{eq:15}, as well as the fact that $(\| F(z^{(i)}) \|_2^2)_{i=1}^\infty$ is decreasing), we can conclude that the sequence $(\Delta^{(i)})_{i=1}^\infty$ is bounded.  (We used the Euclidean distance here.)

By the Bolzano-Weierstrass theorem (for Euclidean spaces), this sequence contains a convergent subsequence; let $(\Delta^{(i)})_{i \in \mathcal{S}}$, for some countable set $\mathcal{S}$, be this subsequence.  Define $\gamma^{(i)} = t^{(i)} / \beta$, \ie, $\gamma^{(i)}$ is the last $t^{(i)}$ for which \eqref{eq:btls} was actually true (\ie, when checked at the start of the $(i+1)$th Newton iteration).  Then we get
\[
\| F(z^{(i)} + \gamma^{(i)} \Delta^{(i)}) \|_2^2 - \| F(z^{(i)}) \|_2^2 \geq - \alpha \gamma^{(i)} \| F(z^{(i)}) \|_2^2;
\]
dividing through by $\gamma^{(i)}$ and taking limits gives (observe that, from assumption (A2), $\limsup_{i \to \infty} t^{(i)} = 0 \implies \lim_{i \to \infty} t^{(i)} = 0$)
\begin{align}
- \alpha \| F(z) \|_2^2 & \leq \lim_{\substack{i,j \to \infty, \; j \in \mathcal{S}}} \frac{\| F(z^{(i)} + \gamma^{(i)} \Delta^{(j)}) \|_2^2 - \| F(z^{(i)}) \|_2^2}{\gamma^{(i)}} \label{eq:12} \\
& \leq \lim_{\substack{i,j \to \infty, \; j \in \mathcal{S}}} \alpha^{1/2} F(z^{(i)})^T \hat F(z^{(i)}, \Delta^{(j)}), \label{eq:10}
\end{align}
with the second line here following by assumption (A4).  Expanding the right-hand side of \eqref{eq:10}, we get
\begin{align*}
\alpha^{1/2} F(z^{(i)})^T \hat F(z^{(i)}, \Delta^{(j)}) & = \alpha^{1/2} F(z^{(i)})^T \left( \hat F(z^{(i)}, \Delta^{(j)}) + F(z^{(i)}) \right) - \alpha^{1/2} F(z^{(i)})^T F(z^{(i)}) \notag \\
& \leq \alpha^{1/2} \| F(z^{(i)}) \|_2 \| F(z^{(i)}) + \hat F(z^{(i)}, \Delta^{(j)}) \|_2 - \alpha^{1/2} \| F(z^{(i)}) \|_2^2 \notag \\
& \leq \alpha^{1/2} \varepsilon \| F(z^{(i)}) \|_2^2 - \alpha^{1/2} \| F(z^{(i)}) \|_2^2 \notag \\
& = - \alpha^{1/2} \| F(z^{(i)}) \|_2^2 (1 - \varepsilon), 
\end{align*}
with the second line following from the Cauchy-Schwarz inequality, and the third from \eqref{eq:trunc}.  So, we obtain for the right-hand side of \eqref{eq:10} that
\begin{align}
\lim_{\substack{i,j \to \infty, \; j \in \mathcal{S}}} \alpha^{1/2} F(z^{(i)})^T \hat F(z^{(i)}, \Delta^{(j)}) \leq - \alpha^{1/2} \| F(z) \|_2^2 (1 - \varepsilon). \label{eq:11}
\end{align}

Putting together \eqref{eq:12} and \eqref{eq:11}, we get that
\begin{align*}
- \alpha \| F(z) \|_2^2 \leq - \alpha^{1/2} \| F(z) \|_2^2 (1 - \varepsilon) \quad \iff \quad 0 \geq \alpha^{1/2} \| F(z) \|_2^2 \left( (1 - \varepsilon) - \alpha^{1/2} \right).
\end{align*}

Now, by assumption (A3), we require that $\varepsilon < 1 - \alpha^{1/2} \iff (1 - \varepsilon) - \alpha^{1/2} > 0$; thus, we must have that $\| F(z) \|_2^2 = 0 \iff F(z) = 0$, as claimed.
\end{proof}

\section{Proof of Theorem \ref{thm:local}}
\newcommand{\resid}{\mathop{\bf res}}
\begin{proof}
The theorem establishes that the iterates $(z^{(i)})_{i=1}^\infty$ generated by Algorithm \ref{alg:ours} are \textit{locally quadratically convergent}, \ie, we get, for large enough $i$ and some $C > 0$, that
\[
\lim_{i \to \infty} \frac{ |z^{(i+1)} - z| }{ (z^{(i)} - z)^2 } = C.
\]

Let $\resid(i) = F(z^{(i)}) + J(z^{(i)}) \Delta^{(i)}$, for convenience.  We begin by making two useful observations.

First, using the second part of assumption (A6), we get that
\begin{align}
\| F(z^{(i)}) - \resid(i) \|_2 & = \| J(z^{(i)}) \Delta^{(i)} \|_2 \notag \\
& \leq \| J(z^{(i)}) \|_2 \| \Delta^{(i)} \|_2 \notag \\
& \leq C_3 \| \Delta^{(i)} \|_2. \label{eq:17}
\end{align}

On the other hand, using the triangle inequality as well as \eqref{eq:trunc}, we get that 
\begin{align}
\| F(z^{(i)}) - \resid(i) \|_2 & \geq \| F(z^{(i)}) \|_2 - \| \resid(i) \|_2 \notag \\
& \geq \| F(z^{(i)}) \|_2 - \varepsilon \| F(z^{(i)}) \|_2 \notag \\
& \geq (1 - \varepsilon) \| F(z^{(i)}) \|_2. \label{eq:18}
\end{align}

So, putting together \eqref{eq:17} and \eqref{eq:18}, we get that
\[
(1 - \varepsilon) \| F(z^{(i)}) \|_2 \leq C_3 \| \Delta^{(i)} \|_2 \quad \implies \quad \| F(z^{(i)}) \|_2 \leq C_4 \| \Delta^{(i)} \|_2,
\]
for some constant $C_4 > 0$, since $1 - \varepsilon > 0$.  Squaring both sides, it follows that
\begin{align}
\| F(z^{(i)}) \|_2 & \leq C_4 \| \Delta^{(i)} \|_2 \notag \\
\implies \| F(z^{(i)}) \|_2^2 & \leq C_4^2 \| \Delta^{(i)} \|_2^2 \notag \\
\implies \| \resid(i) \|_2 & \leq C_5 \| \Delta^{(i)} \|_2^2 \notag \\
\implies \frac{ \| \resid(i) \|_2 }{ \| \Delta^{(i)} \|_2^2 } & \leq C_5, \label{eq:19}
\end{align}
where $C_5 > 0$ is some constant, and the third line follows because \eqref{eq:trunc} and assumption (A3) tell us that $\| \resid(i) \|_2 \leq C \| F(z^{(i)}) \|_2^2$ for some constant $C > 0$.  Finally, \citet[Theorem 2.5]{facchinei1997nonsmooth} and the second part of assumption (A6) tell us that the sequence of iterates $(z^{(i)})_{i=1}^\infty \to z$ converges quadratically, with $F(z) = 0$, as claimed.
\end{proof}

\section{Further details on the minimum variance portfolio optimization example}
Here, we elaborate on putting the minimum variance portfolio optimization problem \eqref{eq:portopt} into the cone form of \eqref{eq:coneprog}.

First, we rewrite the minimum variance portfolio optimization problem \eqref{eq:portopt} as
\begin{equation*}
\begin{array}{ll}
\minimizewrt{\theta \in \mathbf{R}^p, \, w \in \mathbf{R}} & w \\
\subjectto & 
\left\|
\left[
\begin{array}{c}
2 \Sigma^{1/2} \theta \\
1 - w
\end{array}
\right]
\right\|_2
\leq 1 + w
\\
& 1 \leq \ones^T \theta \leq 1,
\end{array}
\end{equation*}
where we used the simple fact \citep[Equation 8]{lobo1998applications} that
\[
\alpha^T \alpha \leq \gamma \delta \quad \iff \quad 
\left\|
\left[
\begin{array}{c}
2 \alpha \\
\gamma - \delta
\end{array}
\right]
\right\|_2
\leq \gamma + \delta,
\]
for some vector $\alpha$ and nonnegative constants $\theta, \gamma$ (for us, $\alpha = \Sigma^{1/2} \theta$, $\gamma = 1$, and $\delta = w$).  Then, we rewrite the above problem as
\begin{equation*}
\begin{array}{ll}
\minimizewrt{x \in \mathbf{R}^{p+1}} & c^T x \\
\subjectto & \| G_1 x + h \|_2 \leq q^T x + z \\
& G_2 x \leq 1, \quad G_3 x \leq -1,
\end{array}
\end{equation*}
where we defined
\begin{align*}
x & = 
\left[
\begin{array}{c}
\theta \\
w
\end{array}
\right] \\\
c & = 
\left[
\begin{array}{c}
0 \\
1
\end{array}
\right] \\
G_1 & = 
\left[
\begin{array}{cc}
2 \Sigma^{1/2} & 0 \\
0 & -1
\end{array}
\right], \quad h = 
\left[
\begin{array}{c}
0 \\
1
\end{array}
\right] \\
q & = 
\left[
\begin{array}{c}
0 \\
1
\end{array}
\right], \quad z = 1 \\
G_2 & = 
\left[
\begin{array}{ccc}
\ones^T & 0
\end{array}
\right], \quad
G_3 = 
\left[
\begin{array}{ccc}
-\ones^T & 0
\end{array}
\right].
\end{align*}

Finally, we just use
\begin{align*}
A = 
\left[
\begin{array}{c}
-G_1 \\
-q^T \\
G_2 \\
G_3
\end{array}
\right], \quad b = 
\left[
\begin{array}{c}
h \\
z \\
1 \\
-1
\end{array}
\right], \quad \mathcal{K} = \mathcal{K}_{\textrm{soc}}^{p+2} \times \mathcal{K}_{\textrm{no}} \times \mathcal{K}_{\textrm{no}},
\end{align*}
to get the cone form of \eqref{eq:coneprog}; here, $\mathcal{K}_{\textrm{soc}}^{p+2}$ denotes the $(p+1)$-dimensional second-order cone.

\section{Further details on the $\ell_1$-penalized logistic regression example}
Here, we elaborate on putting the $\ell_1$-penalized logistic regression problem \eqref{eq:logreg} into the cone form of \eqref{eq:coneprog}.  To keep the notation light, we write
\[
z_i = y_i X_{i \cdot} \theta.
\]

Now, for $i=1,\ldots,N$, we use the simple fact \citep[Section 9.4.1]{serrano2015algorithms} that
\[
\log \left( \sum_i \exp(\alpha_i) \right) \leq -\theta \quad \iff \quad \sum_i \exp(\alpha_i + \theta) \leq 1,
\]
for $\alpha_i, \theta \in \reals$, in order to conclude that
\begin{equation}
\log(\exp(0) + \exp(z_i)) \leq w_i \quad \iff \quad \exp(-w_i) + \exp(z_i - w_i) \leq 1, \label{eq:7}
\end{equation}
where the $w_i \in \reals$ are some variables that we will introduce, later on.  Next, we ``split'' the right-hand side of \eqref{eq:7} into the following set of constraints:
\begin{align*}
\exp(-w_i) & \leq \ell_i \quad \iff \quad
\left[
\begin{array}{c}
-w_i \\
1 \\
\ell_i
\end{array}
\right]
\in \mathcal{K}_{\exp}, \quad i=1,\ldots,N, \\
\exp(z_i - w_i) &\leq q_i \quad \iff \quad
\left[
\begin{array}{c}
z_i - w_i \\
1 \\
q_i
\end{array}
\right]
\in \mathcal{K}_{\exp}, \quad i=1,\ldots,N,
\\
\ell_i + q_i & \leq 1, \quad i=1,\ldots,N,
\end{align*}
where $\ell_i, q_i \in \reals$ are more new variables.  Thus, we can write the $\ell_1$-penalized logistic regression problem \eqref{eq:logreg} as
\begin{equation*}
\begin{array}{ll}
\minimizewrt{\substack{\theta \in \mathbf{R}^p, \, w \in \mathbf{R}^N, \\ t \in \mathbf{R}^p, \, \ell \in \mathbf{R}^N, \\ q \in \mathbf{R}^N}} & \ones^T w + \lambda \ones^T t \\
\subjectto & 
\left[
\begin{array}{c}
-w_i \\
1 \\
\ell_i
\end{array}
\right]
\in \mathcal{K}_{\exp}, \quad i=1,\ldots,N \\
& \left[
\begin{array}{c}
y_i X_{i \cdot} \theta - w_i \\
1 \\
q_i
\end{array}
\right]
\in \mathcal{K}_{\exp}, \quad i=1,\ldots,N \\
& \ell + q \leq \ones \\
& - t \leq \theta \leq t.
\end{array}
\end{equation*}

Finally, to get the cone form of \eqref{eq:coneprog}, we use
\begin{align*}
x & = 
\left[
\begin{array}{c}
\theta \\
w \\
t \\
\ell \\
q
\end{array}
\right], \\
c & = 
\left[
\begin{array}{c}
0 \\
\ones \\
\lambda \ones \\
0 \\
0
\end{array}
\right], \\
A & = 
\left[
\begin{array}{ccccc}
 & & G_1 & & \\
 & & \vdots & & \\
 & & G_N & & \\
 & & H_1 & & \\
 & & \vdots & & \\
 & & H_N & & \\
0 & 0 & 0 & I & I \\
-I & 0 & -I & 0 & 0 \\
I & 0 & -I & 0 & 0
\end{array}
\right], \\
G_i & =
\left[
\begin{array}{ccccc}
0 & e_i^T & 0 & 0 & 0 \\
0 & 0 & 0 & 0 & 0 \\
0 & 0 & 0 & -e_i^T & 0
\end{array}
\right], \quad
H_i =
\left[
\begin{array}{ccccc}
- y_i X_{i \cdot} & e_i^T & 0 & 0 & 0 \\
0 & 0 & 0 & 0 & 0 \\
0 & 0 & 0 & 0 & -e_i^T
\end{array}
\right], \quad i=1,\ldots,N, \\
b & =
\left[
\begin{array}{c}
h \\
\vdots \\ 
h \\
h \\
\vdots \\ 
h \\
\ones \\
0 \\
0
\end{array}
\right], \quad
h = 
\left[
\begin{array}{c}
0 \\
1 \\
0
\end{array}
\right], \\
\mathcal{K} & = \underbrace{\mathcal{K}_{\exp} \times \dots \times \mathcal{K}_{\exp}}_{N} \times \underbrace{\mathcal{K}_{\exp} \times \dots \times \mathcal{K}_{\exp}}_{N} \times \mathcal{K}_{\textrm{no}}^N \times \mathcal{K}_{\textrm{no}}^p \times \mathcal{K}_{\textrm{no}}^p;
\end{align*}
here, $e_i, \; i=1,\ldots,N$ denotes the $i$th standard basis vector in $\reals^N$, and $\mathcal{K}_{\textrm{no}}^i$ denotes the $i$-dimensional nonnegative orthant.

\section{Further details on the robust PCA example}
Here, we elaborate on putting the robust PCA problem \eqref{eq:robust} into the cone form of \eqref{eq:coneprog}.

First, we observe that, using duality arguments (see, \eg, \citet[Section 3]{fazel2001rank} or \citet[Proposition 2.1]{recht2010guaranteed}), we can rewrite the robust PCA problem \eqref{eq:robust} as
\begin{equation*}
\begin{array}{ll}
\minimizewrt{\substack{W_1 \in \mathbf{R}^{N \times N}, W_2 \in \mathbf{R}^{p \times p}, \\ t \in \mathbf{R}^{Np}, \, L, S \in \mathbf{R}^{N \times p}}} & (1/2)(\tr(W_1) + \tr(W_2)) \\
\subjectto & -t \leq \vect(S) \leq t \\
& \ones^T t \leq \lambda \\
& L + S = X \\
& \left[
\begin{array}{cc}
W_1 & L \\
L^T & W_2
\end{array}
\right] \succeq 0.
\end{array}
\end{equation*}

To get the cone form of \eqref{eq:coneprog}, we use
\begin{align*}
x & =
\left[
\begin{array}{c}
\vect(W_1) \\
\vect(W_2) \\
t \\
\vect(L) \\
\vect(S)
\end{array}
\right], \\
c & = 
\left[
\begin{array}{c}
(1/2) \vect(I) \\
(1/2) \vect(I) \\
0 \\
0 \\
0
\end{array}
\right], \\
A & = 
\left[
\begin{array}{ccccc}
0 & 0 & -I & 0 & -I \\
0 & 0 & -I & 0 & I \\
0 & 0 & \ones^T & 0 & 0 \\
0 & 0 & 0 & I & I \\
0 & 0 & 0 & -I & -I \\
G_{W_1} & G_{W_2} & 0 & G_{L} & 0
\end{array}
\right], \\
G_{W_1} & = 
\left[
\begin{array}{ccccc}
\vect(G_{W_1}^{(1,1)}) & \vect(G_{W_1}^{(2,1)}) & \cdots & \vect(G_{W_1}^{(N-1,N)}) & \vect(G_{W_1}^{(N,N)})
\end{array}
\right], \\
& \quad\quad \textrm{where $G_{W_1}^{(i,j)}$ is $0$ except with the $(i,j)$th entry of its upper left $N \times N$ block set to 1}, \\
G_{W_2} & = 
\left[
\begin{array}{ccccc}
\vect(G_{W_2}^{(1,1)}) & \vect(G_{W_2}^{(2,1)}) & \cdots & \vect(G_{W_2}^{(p-1,p)}) & \vect(G_{W_2}^{(p,p)})
\end{array}
\right], \\
& \quad\quad \textrm{where $G_{W_2}^{(i,j)}$ is $0$ except with the $(i,j)$th entry of its bottom right $p \times p$ block set to 1}, \\
G_{L} & = 
\left[
\begin{array}{ccccc}
\vect(G_{L}^{(1,1)}) & \vect(G_{L}^{(2,1)}) & \cdots & \vect(G_{L}^{(N-1,p)}) & \vect(G_{L}^{(N,p)})
\end{array}
\right], \\
& \quad\quad \textrm{where $G_{L}^{(i,j)}$ is $0$ except with the $(i,j)$th entry of its upper right $N \times p$ block} \\
& \quad\quad \textrm{and the $(j,i)$th entry of its lower left $p \times N$ block set to 1}, \\
b & = 
\left[
\begin{array}{c}
0 \\
0 \\
\lambda \\
\vect(X) \\
-\vect(X) \\
0
\end{array}
\right], \\
\mathcal{K} & = \mathcal{K}_{\textrm{no}}^{Np} \times \mathcal{K}_{\textrm{no}}^{Np} \times \mathcal{K}_{\textrm{no}} \times \mathcal{K}_{\textrm{no}}^{Np} \times \mathcal{K}_{\textrm{no}}^{Np} \times \mathcal{K}_{\textrm{psd}}^{N+p}.
\end{align*}
Here, $\mathcal{K}_{\textrm{psd}}^i$ denotes the $(i \times i)$-dimensional positive semidefinite cone.  Also, observe that the last row of $A, b$ above encodes the constraint
\[
\left[
\begin{array}{cc}
W_1 & L \\
L^T & W_2
\end{array}
\right] \in \mathcal{K}_{\textrm{psd}}^{N+p},
\]
which we can write as a linear matrix inequality \citep[Equation 1.7]{andersen2011interior}:
\begin{align*}
\left[
\begin{array}{cc}
W_1 & L \\
L^T & W_2
\end{array}
\right] \succeq 0 
& \iff 
\sum_{i,j} G_{W_1}^{(i,j)} (W_1)_{ij} + \sum_{i,j} G_{W_2}^{(i,j)} (W_2)_{ij} + \sum_{i,j} G_{L}^{(i,j)} L_{ij} \succeq 0 \\
& \iff 
\left[
\begin{array}{ccccc}
G_{W_1} & G_{W_2} & 0 & G_{L} & 0
\end{array}
\right]
x \succeq 0.
\end{align*}

\newcommand\scalemath[2]{\scalebox{#1}{\mbox{\ensuremath{\displaystyle #2}}}}
\begin{adjustwidth*}{-5.75cm}{-2cm}
\begin{landscape}
\thispagestyle{empty}
\vspace*{\fill}
Expression for the matrix $D^{-1}$, used in the proof of Lemma \ref{lem:projexpsemi}:
\begin{equation}
\scalemath{0.6}{
D^{-1} = (1/k) \cdot 
\left[
\begin{array}{cccc}
-1-\frac{e^{z_1^\star/z_2^\star} \nu^\star (z_1^\star)^2}{(z_2^\star)^3}-\frac{e^{z_1^\star/z_2^\star} e^{ (z_1^\star)^3}}{(z_2^\star)^3}+\frac{2
e^{z_1^\star/z_2^\star} e^{ (z_1^\star)^2}}{(z_2^\star)^2}-\frac{e^{z_1^\star/z_2^\star} e^{ z_1^\star}}{z_2^\star} & -\frac{e^{z_1^\star/z_2^\star} \nu^\star z_1^\star}{(z_2^\star)^2}-\frac{e^{z_1^\star/z_2^\star}
e^{ (z_1^\star)^2}}{(z_2^\star)^2}+\frac{e^{z_1^\star/z_2^\star} e^{ z_1^\star}}{z_2^\star} & -e^{z_1^\star/z_2^\star}-\frac{e^{\frac{2 z_1^\star}{z_2^\star}} \nu^\star z_1^\star}{(z_2^\star)^2} & -e^{z_1^\star/z_2^\star}-\frac{e^{\frac{2 z_1^\star}{z_2^\star}}
\nu^\star z_1^\star}{(z_2^\star)^2} \\
e^{\frac{2 z_1^\star}{z_2^\star}}-\frac{e^{z_1^\star/z_2^\star} \nu^\star z_1^\star}{(z_2^\star)^2}-\frac{e^{\frac{2
z_1^\star}{z_2^\star}} z_1^\star}{z_2^\star} & -1-e^{\frac{2 z_1^\star}{z_2^\star}}-\frac{e^{z_1^\star/z_2^\star} \nu^\star}{z_2^\star} & -e^{z_1^\star/z_2^\star}-\frac{e^{\frac{2 z_1^\star}{z_2^\star}}
\nu^\star}{z_2^\star}+\frac{e^{z_1^\star/z_2^\star} z_1^\star}{z_2^\star} & -e^{z_1^\star/z_2^\star}-\frac{e^{\frac{2 z_1^\star}{z_2^\star}} \nu^\star}{z_2^\star}+\frac{e^{z_1^\star/z_2^\star}
z_1^\star}{z_2^\star} \\
-e^{z_1^\star/z_2^\star}+\frac{e^{z_1^\star/z_2^\star} e^{ \nu^\star (z_1^\star)^3}}{(z_2^\star)^4}-\frac{e^{\frac{2
z_1^\star}{z_2^\star}} \nu^\star (z_1^\star)^2}{(z_2^\star)^3}-\frac{e^{z_1^\star/z_2^\star} e^{ \nu^\star (z_1^\star)^2}}{(z_2^\star)^3} & \frac{e^{z_1^\star/z_2^\star} e^{ \nu^\star
(z_1^\star)^2}}{(z_2^\star)^3}-\frac{e^{\frac{2 z_1^\star}{z_2^\star}} \nu^\star z_1^\star}{(z_2^\star)^2}-\frac{e^{z_1^\star/z_2^\star} e^{ \nu^\star
z_1^\star}}{(z_2^\star)^2}+\frac{e^{ (z_1^\star)^2}}{(z_2^\star)^2}-\frac{e^{ z_1^\star}}{z_2^\star} & -e^{\frac{2 z_1^\star}{z_2^\star}}+\frac{e^{\frac{2 z_1^\star}{z_2^\star}} e^{ \nu^\star (z_1^\star)^2}}{(z_2^\star)^3}-\frac{e^{z_1^\star/z_2^\star}
e^{ (z_1^\star)^3}}{(z_2^\star)^3}-\frac{e^{\frac{3 z_1^\star}{z_2^\star}} \nu^\star z_1^\star}{(z_2^\star)^2}-\frac{e^{\frac{2 z_1^\star}{z_2^\star}}
e^{ \nu^\star z_1^\star}}{(z_2^\star)^2}+\frac{2 e^{z_1^\star/z_2^\star} e^{ (z_1^\star)^2}}{(z_2^\star)^2}-\frac{e^{z_1^\star/z_2^\star} e^{
z_1^\star}}{z_2^\star} & 1+\frac{e^{z_1^\star/z_2^\star} \nu^\star (z_1^\star)^2}{(z_2^\star)^3}+\frac{e^{z_1^\star/z_2^\star} \nu^\star}{z_2^\star} \\
-e^{z_1^\star/z_2^\star}+\frac{e^{z_1^\star/z_2^\star}
e^{ \nu^\star (z_1^\star)^3}}{(z_2^\star)^4}-\frac{e^{\frac{2 z_1^\star}{z_2^\star}} \nu^\star (z_1^\star)^2}{(z_2^\star)^3}-\frac{e^{z_1^\star/z_2^\star}
e^{ \nu^\star (z_1^\star)^2}}{(z_2^\star)^3} & \frac{e^{z_1^\star/z_2^\star} e^{ \nu^\star (z_1^\star)^2}}{(z_2^\star)^3}-\frac{e^{\frac{2 z_1^\star}{z_2^\star}}
\nu^\star z_1^\star}{(z_2^\star)^2}-\frac{e^{z_1^\star/z_2^\star} e^{ \nu^\star z_1^\star}}{(z_2^\star)^2}+\frac{e^{ (z_1^\star)^2}}{(z_2^\star)^2}-\frac{e^{
z_1^\star}}{z_2^\star} & 1+\frac{e^{z_1^\star/z_2^\star} \nu^\star (z_1^\star)^2}{(z_2^\star)^3}+\frac{e^{z_1^\star/z_2^\star} \nu^\star}{z_2^\star} & 1+\frac{e^{z_1^\star/z_2^\star} \nu^\star (z_1^\star)^2}{(z_2^\star)^3}+\frac{e^{z_1^\star/z_2^\star}
\nu^\star}{z_2^\star}
\end{array}
\right]
,
}
\label{eq:25}
\end{equation}
\[
\textrm{where } k = 
-1-e^{\frac{2 z_1^\star}{\hat z_2}}-\frac{e^{z_1^\star/z_2^\star} \nu^\star (z_1^\star)^2}{(z_2^\star)^3}+\frac{e^{\frac{2 z_1^\star}{z_2^\star}} e^{ \nu^\star (z_1^\star)^2}}{(z_2^\star)^3}-\frac{e^{z_1^\star/z_2^\star} e^{ (z_1^\star)^3}}{(z_2^\star)^3}-\frac{e^{\frac{3 z_1^\star}{z_2^\star}} \nu^\star z_1^\star}{(z_2^\star)^2}-\frac{e^{\frac{2 z_1^\star}{z_2^\star}} e^{ \nu^\star z_1^\star}}{(z_2^\star)^2}+\frac{2 e^{z_1^\star/z_2^\star} e^{ (z_1^\star)^2}}{(z_2^\star)^2}-\frac{e^{z_1^\star/z_2^\star} \nu^\star}{z_2^\star}-\frac{e^{z_1^\star/z_2^\star} e^{ z_1^\star}}{z_2^\star}
.
\]
\vspace*{\fill}
\end{landscape}
\end{adjustwidth*}

%
%


\end{document}